% !TeX encoding = utf-8 
% !TeX program = rubber
% !TeX spellcheck = en_US

\pdfoutput=1
\synctex=1
\documentclass[12pt,reqno,final]{amsart}
\usepackage[nopagebackref,hidelinks]{mathieu}

\geometry{margin=25mm}
\allowdisplaybreaks[3]
\binoppenalty=\maxdimen
\relpenalty=\maxdimen
\setlength{\emergencystretch}{3em}

\DeclareMathOperator{\id}{id}
\DeclareMathOperator{\rk}{rk}
\DeclareMathOperator{\pbw}{pbw}
\DeclareMathOperator{\Bott}{Bott}
\DeclareMathOperator{\EXP}{EXP}

\newcommand{\NN}{\mathbb{N}} 
\newcommand{\NO}{\mathbb{N}_0} 
\newcommand{\ZZ}{\mathbb{Z}} 
\newcommand{\RR}{\mathbb{R}} 
\newcommand{\CC}{\mathbb{C}} 
\newcommand{\into}{\hookrightarrow}
\newcommand{\onto}{\twoheadrightarrow}
\newcommand{\xto}[1]{\xrightarrow{#1}}
\newcommand{\transpose}{^{\top}} 
\newcommand{\argument}{-}
\newcommand{\abs}[1]{\left| #1 \right|} 
\newcommand{\liederivative}[1]{\mathcal{L}_{#1}}
\newcommand{\shuffle}[2]{\mathfrak{S}_{#1}^{#2}}
\newcommand{\sections}[1]{\Gamma(#1)}
\newcommand{\enveloping}[1]{\mathcal{U}(#1)}
\newcommand{\groupoid}[1]{\mathscr{#1}}
\newcommand{\lie}[2]{[#1,#2]} 
\newcommand{\duality}[2]{\left\langle#1\middle|#2\right\rangle} 
\newcommand{\interior}[1]{\iota_{#1}}
\newcommand{\KK}{\Bbbk}
\newcommand{\torsion}{T^\nabla}
\newcommand{\xnabla}{X^\nabla}
\newcommand{\xtwo}{X_2}
\newcommand{\xhigher}{X_{\geqslant 3}}
\newcommand{\machin}[1]{\lceil #1\rfloor}
\newcommand{\etendu}[1]{#1_{\natural}}
\newcommand{\perturbed}[1]{\breve{#1}}

\title[Fedosov dg manifolds associated with Lie pairs]{Fedosov dg manifolds associated with Lie pairs}

\thanks{Research partially supported by NSF grants 
DMS-1707545, DMS-1406668 and DMS-1101827, and NSA grant H98230-14-1-0153.}

\author{Mathieu Stiénon}
\address{Department of Mathematics, Pennsylvania State University}
\email{stienon@psu.edu} 

\author{Ping Xu}
\address{Department of Mathematics, Pennsylvania State University}
\email{ping@math.psu.edu}

\dedicatory{Dedicated to the memory of our colleague and friend John Roe}

\begin{document}

\begin{abstract}
Given any pair $(L,A)$ of Lie algebroids, 
we construct a differential graded manifold $(L[1]\oplus L/A,Q)$, 
which we call Fedosov dg manifold.
We prove that the cohomological vector field $Q$ constructed on $L[1]\oplus L/A$ 
by the Fedosov iteration method arises as a byproduct of the Poincaré--Birkhoff--Witt 
map established in~\cite{arXiv:1408.2903}. 
Finally, using the homological perturbation lemma, 
we establish a quasi-isomorphism of Dolgushev--Fedosov type: 
the differential graded algebras of functions on the dg manifolds $(A[1],d_A)$ 
and $(L[1]\oplus L/A,Q)$ are homotopy equivalent.
\end{abstract}

\maketitle

\tableofcontents

\section*{Introduction}

Fedosov resolutions --- we call them Fedosov dg manifolds later in the paper --- 
played a key role in globalizing Kontsevich's formality theorem to smooth manifolds \cite{MR2102846}. 
One can expect deformation quantization of geometric objects other than smooth manifolds 
to require the development of analogues of Fedosov resolutions for these other geometric objects.
The leaf space of a foliation on a smooth manifold is one instance of such other geometric objects. 
In general, the leaf space may not be a smooth manifold
--- it is in a certain sense a noncommutative manifold.
However, it can be considered as a particular example of Lie pair.

By a Lie pair $(L,A)$, we mean an inclusion $A\hookrightarrow L$
of Lie algebroids over a smooth manifold $M$.
Lie pairs arise naturally in a number of areas of mathematics
such as Lie theory, complex geometry, and foliation theory.
For instance, a complex manifold $X$ determines a Lie pair over $\CC$
with $L= T_X\otimes\CC$ and $A=T_X^{0,1}$.
A foliation $\mathcal{F}$ on a smooth manifold $M$ determines a Lie pair over $\RR$:
this time $L$ is the tangent bundle to $M$ 
and $A$ is the integrable distribution $T_\mathcal{F}$ on $M$ tangent to the foliation $\mathcal{F}$. 
A $\mathfrak{g}$-manifold also gives rise to a Lie pair in natural way \cite{MR3650387}.

The purpose of this paper is to construct analogues of Fedosov resolutions for Lie pairs. 
More precisely, for any Lie pair $(L,A)$, we present two equivalent constructions of a dg manifold, 
called Fedosov dg manifold, and we establish a quasi-isomorphism of Dolgushev--Fedosov type.
The first construction relies on the Poincaré--Birkhoff--Witt map introduced in \cite{arXiv:1408.2903}, 
a generalized symmetrization map, while the second construction is based on Fedosov's iteration method. 

Given a Lie pair $(L,A)$, the quotient $L/A$ is naturally an $A$-module \cite{MR3439229}.
When $L$ is the tangent bundle to a manifold $M$ and $A$ is an integrable distribution on $M$,
the infinitesimal $A$-action on $L/A$ reduces to the classical Bott
flat connection \cite{MR0362335}.
In~\cite{arXiv:1408.2903}, together with Laurent-Gengoux, we showed that, for any Lie pair over $\RR$, 
each choice of (1) a splitting of the short exact sequence of vector bundles \[ 0\to A \to L \to L/A \to 0 \]
and (2) an $L$-connection $\nabla$ on $L/A$ extending the Bott $A$-connection 
determines an exponential map \[ \exp: L/A\to\groupoid{L}/\groupoid{A} .\] 
Here $\groupoid{L}$ and $\groupoid{A}$ are local Lie groupoids corresponding to the Lie algebroids $L$ and $A$, respectively. 
Considering the (fiberwise) infinite-order jet of this exponential map, 
we obtained an isomorphism of filtered $R$-coalgebras (with $R=C^\infty (M)$)
\[ \pbw:\sections{S(L/A)}\to\tfrac{\enveloping{L}}{\enveloping{L}\sections{A}} ,\]
which we called Poincaré--Birkhoff--Witt map. 
In particular, if $L$ is a Lie algebra $\mathfrak{g}$ and $A$ is the trivial Lie algebra of dimension $0$, 
there exists a natural choice of connection and the resulting $\pbw$ map 
is precisely the symmetrization map $S(\mathfrak{g})\to\enveloping{\mathfrak{g}}$. 
These PBW maps arising from Lie pairs admit an explicit recursive characterization 
valid for Lie pairs over any field $\KK$ of characteristic zero and not just $\RR$. 
Hence these PBW maps can be considered as algebraic formal exponential maps.

Transferring the canonical infinitesimal action of $L$ on the coalgebra 
$\tfrac{\enveloping{L}}{\enveloping{L}\sections{A}}$ 
--- this is an infinitesimal action by coderivations --- 
through the map $\pbw$, we obtain a \emph{flat} 
$L$-connection $\nabla^\lightning$ on $S(L/A)$:
\[ \nabla^\lightning_l(s)=\pbw^{-1}\big(l\cdot\pbw(s)\big) ,\]
for all $l\in\sections{L}$ and $s\in\sections{S (L/A)}$. 
The covariant Chevalley--Eilenberg differential 
\[ d_L^{\nabla^\lightning}: \sections{\Lambda^\bullet L^\vee \otimes \hat{S} \big((L/A)^\vee\big)}
\to \sections{\Lambda^{\bullet+1} L^\vee \otimes \hat{S} \big((L/A)^\vee\big)} \]
of the induced flat $L$-connection on the dual bundle 
$\hat{S}\big((L/A)^\vee\big)$ is a derivation of degree $(+1)$ of the algebra
$\sections{\Lambda^\bullet L^\vee\otimes\hat{S}\big((L/A)^\vee\big)}$ 
of smooth functions on the graded manifold $L[1]\oplus L/A$.
As a consequence, $(L[1]\oplus L/A, d_L^{\nabla^\lightning})$ is a dg manifold.
We prove that, when $\nabla$ is torsion-free,
the homological vector field $d_L^{\nabla^\lightning}$ 
coincides with a homological vector field $Q$ constructed 
by Fedosov's iteration method. 
We elect to call a dg manifold $(L[1]\oplus L/A,Q)$ 
constructed in this way a \emph{Fedosov dg manifold}.

It is a well known theorem of Dolgushev~\cite{MR2102846} that, 
for a smooth manifold $M$, 
the Fedosov dg manifold $T_M[1]\oplus T_M$ 
(associated with the Lie pair $(L,A)$ 
where $L$ is the tangent bundle to $M$ 
and $A$ is its trivial subbundle of rank $0$) 
gives rise to a resolution 
$\Omega^\bullet\big(M;\hat{S}(T^\vee_M)\big)$ of $C^\infty(M)$.
Our second main theorem extends this result to Lie pairs. 
Note that, for a Lie pair $(L,A)$, 
the space of functions on the Fedosov dg manifold 
$(L[1]\oplus L/A, Q)$ is the differential graded algebra 
$\big(\sections{\Lambda^\bullet L^\vee\otimes\hat{S}(B^\vee)},Q\big)$ 
while the space of functions on the dg manifold $(A[1],d_A)$ is 
the differential graded algebra $\big(\sections{\Lambda^\bullet A^\vee},d_A\big)$.
We construct an explicit quasi-isomorphism of Dolgushev--Fedosov type 
from $\big(\sections{\Lambda^\bullet A^\vee},d_A\big)$ 
to $\big(\sections{\Lambda^\bullet L^\vee\otimes\hat{S}(B^\vee)},Q\big)$.
More precisely, using homological perturbation, we establish a contraction of 
$\big(\sections{\Lambda^\bullet L^\vee\otimes\hat{S}(B^\vee)},Q)$
onto $\big(\sections{\Lambda^\bullet A^\vee},d_A\big)$:
\[ \begin{tikzcd}
\big(\sections{\Lambda^\bullet A^\vee},d_{A}\big)
\arrow[r, "\perturbed{\tau}", shift left] &
\big(\sections{\Lambda^\bullet L^\vee\otimes\hat{S}(B^\vee)},d_{L}^{\nabla^\lightning}\big)
\arrow[l, "\sigma", shift left]
\arrow[loop, "\perturbed{h}", out=5, in=-5, looseness = 3]
\end{tikzcd} .\]

As an application, we obtain an alternative proof of
a theorem of Emmrich--Weinstein \cite[Theorem~1.6]{MR1327535}.
Given a smooth manifold $M$ and a torsion-free affine connection $\nabla$ on it, 
Emmrich--Weinstein~\cite{MR1327535} constructed a dg manifold $(T_M[1]\oplus T_M,Q)$ 
using Fedosov's iteration method (see~\cite{MR2102846,MR1293654}).
Emmrich--Weinstein~\cite{MR1327535} explained that (1) the derivation 
\[ Q:\Omega^\bullet\big(M;\hat{S}(T^\vee_M)\big)\to\Omega^{\bullet+1}\big(M;\hat{S}(T^\vee_M)\big) \] 
determines a formal flat (nonlinear) Ehresmann connection on some neighborhood of
the zero section of $TM \to M$ and (2) the leaves of this flat Ehresmann connection 
are transversal to the zero section. Hence, this formal flat Ehresmann connection 
induces a `formal exponential map' $\EXP$ --- see~\cite[Section~7]{MR1327535}.
Emmrich--Weinstein proved that the map $\EXP$ coincides with the infinite-order jet 
of the geodesic exponential map $\exp$ associated with the affine connection $\nabla$ 
--- see~\cite[Theorem 1.6]{MR1327535}.
Their proof resorted to an indirect argument involving analytic manifolds.
In this paper, we present a simple and direct proof based on
(1) our result that the homological vector fields $Q$ and $d^{\nabla^\lightning}$ are equal, 
(2) the contraction 
\[ \begin{tikzcd}
C^\infty(M) \arrow[r, "\perturbed{\tau}", shift left] &
\big(\Omega^\bullet(M;\hat{S}(T^\vee_M)),d^{\nabla^\lightning}\big) 
\arrow[l, "\sigma", shift left]
\arrow[loop, "\perturbed{h}", out=5, in=-5, looseness = 3]
\end{tikzcd} \]
mentioned earlier, 
and (3) the geometric interpretation of the PBW map described at length in~\cite{arXiv:1408.2903}.
Indeed, when $L=T_M$ and $A$ is its trivial subbundle of rank $0$, the map 
\[ \perturbed{\tau}:C^\infty(M)\to\Omega^0\big(M;\hat{S}(T_M^{\vee})\big) \]
is precisely the pull-back by the formal exponential map $\EXP$ 
studied by Emmrich--Weinstein~\cite{MR1327535}. 

In fact, we obtain an extension of the Emmrich--Weinstein theorem to the context of matched pairs 
--- see Theorem~\ref{Montmartre}.
A matched pair of Lie algebroids $L=A\bowtie B$
is a Lie pair $(L,A)$ admitting a splitting $j:B\to L$ 
of the short exact sequence $0\to A\to L\to B\to 0$, 
whose image $j(B)$ happens to be a Lie subalgebroid of $L$.
In the special case of matched pairs, 
we obtain an explicit formula for the map $\perturbed{\tau}$ 
--- see Equation~\eqref{eq:FCO} --- 
generalizing Emmrich--Weinsteins's interpretation of $\perturbed{\tau}$ 
(the pull-back by $\text{EXP}$ in the terminology of~\cite{MR1327535})
as the infinite-order jet of an exponential map.

The Dolgushev--Fedosov type resolutions for Lie pairs, which we establish in the present work, 
play a crucial role in the proof of two results expounded in a subsequent work \cite{MR3964152}: 
a formality theorem and a Kontsevich--Duflo type theorem for Lie pairs.
While the spaces of polyvector fields and polydifferential operators on a smooth manifold 
both carry obvious dgla structures, 
there is generally no such obvious $L_\infty$ algebra structure on either of the spaces 
of polyvector fields and polydifferential operators associated with a Lie pair. 
However, there exist natural $L_\infty$ algebra structures on the spaces of polyvector fields and 
polydifferential operators on a dg foliation of the Fedosov dg manifold arising from the Lie pair. 
Our Dolgushev--Fedosov resolutions for Lie pairs allow for the homotopy transfer of these 
$L_\infty$ structures from the Fedosov dg manifold to the Lie pair. 
This was done in~\cite{arXiv:1901.04602}, where the dg foliation of the Fedosov dg manifold 
is called Fedosov dg Lie algebroid. 
The Fedosov dg manifold construction was recently extended to $\ZZ$-graded manifolds 
by Liao--Stiénon~\cite{MR3910470} (see also \cite{MR3754617}). 

\section*{Terminology and notations} 

\subsubsection*{Natural numbers} We use the symbol $\NN$ to denote the set of positive integers and the symbol $\NN_0$ for the set of nonnegative integers. 

\subsubsection*{Field $\KK$ and ring $R$} We use the symbol $\KK$ to denote the field of either real or complex numbers. 
The symbol $R$ always denotes the algebra of smooth functions on $M$ with values in $\KK$. 

\subsubsection*{Completed symmetric algebra}
Given a module $\mathscr{M}$ over a ring, the symbol $\hat{S}(\mathscr{M})$ denotes 
the $\mathfrak{m}$-adic completion of the symmetric algebra $S(\mathscr{M})$, 
where $\mathfrak{m}$ is the ideal of $S(\mathscr{M})$ generated by $\mathscr{M}$. 

\subsubsection*{Duality pairing}
For every vector bundle $E\to M$, we define a duality pairing 
\[ \sections{\hat{S}(E^\vee)}\times\sections{S(E)} \to R \] 
by 
\[ \duality{\nu_1\otimes\cdots\otimes\nu_p}{v_1\otimes\cdots\otimes v_q} 
=\begin{cases}\sum_{\sigma\in S_p}\prod_{k=1}^p\duality{\nu_k}{v_{\sigma(k)}} 
& \text{if } p=q , \\ 0 & \text{otherwise.} \end{cases} \] 

\subsubsection*{Multi-indices}
Let $E\to M$ be a smooth vector bundle of finite rank $r$,
let $(\partial_i)_{i\in\{1,\dots,r\}}$ be a local frame of $E$ 
and let $(\chi_j)_{j\in\{1,\dots,r\}}$ be the dual local frame of $E^\vee$.
Thus, we have $\duality{\chi_i}{\partial_j}=\delta_{i,j}$.
Given a multi-index $I=(I_1,I_2,\cdots,I_r)\in\NO^r$, 
we adopt the following multi-index notations:
\begin{gather*}
I! = I_1! \cdot I_2! \cdots I_r! \\ 
\abs{I} = I_1+I_2+\cdots+I_r \\ 
\partial^I=\underset{I_1 \text{ factors}}{\underbrace{\partial_1\odot\cdots\odot \partial_1}}
\odot\underset{I_2 \text{ factors}}{\underbrace{\partial_2\odot\cdots\odot\partial_2}}
\odot\cdots\odot\underset{I_r \text{ factors}}{\underbrace{\partial_r\odot\cdots\odot\partial_r}} \\ 
\chi^I=\underset{I_1 \text{ factors}}{\underbrace{\chi_1\odot\cdots\odot\chi_1}}
\odot\underset{I_2 \text{ factors}}{\underbrace{\chi_2\odot\cdots\odot\chi_2}}
\odot\cdots\odot\underset{I_r \text{ factors}}{\underbrace{\chi_r\odot\cdots\odot\chi_r}}
\end{gather*}
We use the symbol $e_k$ to denote the multi-index all of whose 
components are equal to $0$ except for the $k$-th which is equal to $1$. 
Thus $\chi^{e_k}=\chi_k$.

\subsubsection*{Shuffles} 
A $(p,q)$-shuffle is a permutation $\sigma$ of the set $\{1,2,\cdots,p+q\}$ such that 
\[ \sigma(1)<\sigma(2)<\cdots<\sigma(p) 
\qquad\text{and}\qquad 
\sigma(p+1)<\sigma(p+2)<\cdots<\sigma(p+q) .\]
The symbol $\shuffle{p}{q}$ denotes the set of $(p,q)$-shuffles. 

\subsubsection*{Graduation shift} 
Given a graded vector space $V=\bigoplus_{k\in\ZZ}V^{(k)}$, the notation $V[i]$ denotes the graded
vector space obtained by shifting the grading on $V$ according to the rule
$(V[i])^{(k)}=V^{(i+k)}$. Accordingly, if $E=\bigoplus_{k\in\ZZ}E^{(k)}$
is a graded vector bundle over $M$, the notation $E[i]$ denotes
the graded vector bundle obtained by shifting the degree in the fibers
of $E$ according to the above rule. 

\subsubsection*{Dg manifolds}
A dg manifold is a $\ZZ$-graded manifold endowed with a homological vector field, 
i.e.\ a vector field $Q$ of degree $(+1)$ such that $\lie{Q}{Q} =0$. 
Dg manifolds are also known as $Q$-manifolds. 
For details and further references, see~\cite{MR1432574, MR2709144, MR2534186}.

\section{Preliminaries}

\subsection{Lie algebroids and Lie pairs}

\subsubsection*{Lie algebroids}
We use the symbol $\KK$ to denote either of the fields $\RR$ and $\CC$. 
A \emph{Lie algebroid} over $\KK$ is 
a $\KK$-vector bundle $L\to M$ together with 
a bundle map $\rho:L\to T_M\otimes_\RR \KK$ 
called \emph{anchor} and a Lie bracket 
$[\argument,\argument]$ on sections of $L$ such that
$\rho:\sections{L}\to\mathfrak{X}(M)\otimes \KK$ is a morphism of Lie algebras
and \[ [X,fY]=f[X,Y]+\big(\rho(X)f\big)Y \]
for all $X,Y\in\Gamma(L)$ and $f\in C^\infty(M,\KK)$.
In this paper `Lie algebroid' always means `Lie algebroid over $\KK$' unless specified otherwise.
A $\KK$-vector bundle $L\to M$ is a Lie algebroid if and only if $\sections{L}$ 
is a \emph{Lie--Rinehart algebra} \cite{MR0154906} over the commutative ring $C^\infty(M,\KK)$. 

\subsubsection*{Lie pairs}
By a \emph{Lie pair} $(L,A)$, we mean an inclusion $A\hookrightarrow L$ 
of Lie algebroids over a smooth manifold $M$.

\begin{examples}\strut
\begin{enumerate}
\item Let $\mathfrak{h}$ be a Lie subalgebra of a Lie algebra $\mathfrak{g}$. 
Then $(\mathfrak{g},\mathfrak{h})$ is a Lie pair over the one-point manifold $\{*\}$. 
\item Let $X$ be a complex manifold. Then $(T_X\otimes\CC,T^{0,1}_X)$ is a Lie pair over $X$. 
\item Let $\mathcal{F}$ be a foliation on a smooth manifold $M$. 
Then $(T_M,T_{\mathcal{F}})$ is a Lie pair over $M$. 
\end{enumerate}
\end{examples}

\subsubsection*{Matched pairs}
A \emph{matched pair of Lie algebroids} $L=A\bowtie B$ is a Lie pair $(L,A)$ together with a splitting 
$j:B\to L$ of the short exact sequence $0\to A\to L\to B\to 0$, 
whose image $j(B)$ happens to be a Lie subalgebroid of $L$ --- 
see~\cite{MR1430434, MR1460632, MR1716681} for more details. 

\begin{examples}\strut
\begin{enumerate}
\item If $X$ is a complex manifold, 
then $T_X\otimes\CC=T^{0,1}_X\bowtie T^{1,0}_X$ 
is a matched pair of complex Lie algebroids over $X$. 
\item
Let $G$ be a Poisson Lie group and let $P$ be a Poisson $G$-space, 
i.e.\ a Poisson manifold $(P,\pi)$ endowed with a $G$-action $G\times P\to P$ 
which happens to be a Poisson map. According to Lu~\cite{MR1430434}, 
the cotangent Lie algebroid $A=\big(T_P^\vee\big)_\pi$ 
and the transformation Lie algebroid $B=P\rtimes\mathfrak{g}$ 
form a matched pair of Lie algebroids over the manifold $P$. 
\end{enumerate}
\end{examples}

\subsection{Chevalley--Eilenberg differentials, connections, and representations}

Let $L$ be a Lie algebroid over a smooth manifold $M$, 
and $R$ be the algebra of smooth functions on $M$ valued in $\KK$.
The Chevalley--Eilenberg differential 
\[ d_L:\sections{\Lambda^k L^\vee}\to\sections{\Lambda^{k+1} L^\vee} \] 
defined by 
\begin{multline*}
\big(d_L \omega\big)(v_0,v_1,\cdots,v_k)=
\sum_{i=0}^{n} \rho(v_i)\big(\omega(v_0,\cdots,\widehat{v_i},\cdots,v_k)\big) \\ 
+\sum_{i<j}\omega(\lie{v_i}{v_j},v_0,\cdots,\widehat{v_i},\cdots,\widehat{v_j},\cdots,v_k) 
\end{multline*}
and the exterior product make $\bigoplus_{k\geqslant 0}\sections{\Lambda^k L^\vee}$ 
into a differential graded commutative algebra.

The following proposition is an immediate consequence of the definitions. 
\begin{proposition}\label{Piacenza}
Let $L$ be a Lie algebroid and let $A$ and $B$ be two vector subbundles of $L$ such that $L=A\oplus B$. 
Let $p:L\onto A$ and $q:L\onto B$ denote the canonical projections, 
let $p\transpose:A^\vee\into L^\vee$ and $q\transpose:B^\vee\into L^\vee$ denote their respective dual maps, 
and set \[ \Omega^{u,v}=\sections{p\transpose(\Lambda^u A^\vee)\wedge q\transpose(\Lambda^v B^\vee)} .\]
\begin{enumerate}
\item If neither $A$ nor $B$ is a Lie subalgebroid of $L$, then 
\[ d_L(\Omega^{u,v})\subset \Omega^{u+2,v-1}\oplus\Omega^{u+1,v}\oplus
\Omega^{u,v+1}\oplus\Omega^{u-1,v+2} .\] 
\item If $A$ is a Lie subalgebroid of $L$, i.e.\ if $(L,A)$ is a Lie pair, then 
\[ d_L(\Omega^{u,v})\subset \Omega^{u+1,v}\oplus
\Omega^{u,v+1}\oplus\Omega^{u-1,v+2} .\] 
\item If both $A$ and $B$ are Lie subalgebroids of $L$, i.e.\ if $L=A\bowtie B$ is a matched pair, 
then \[ d_L(\Omega^{u,v})\subset \Omega^{u+1,v}\oplus\Omega^{u,v+1} .\] 
\end{enumerate}
\end{proposition}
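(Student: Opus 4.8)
The plan is to reduce everything to a bidegree bookkeeping computation for the Chevalley--Eilenberg differential $d_L$ restricted to the subalgebra generated by $\Omega^{1,0}\oplus\Omega^{0,1}$ together with $\Omega^{0,0}=R$. Since $d_L$ is a degree $+1$ derivation of the graded commutative $R$-algebra $\bigoplus_k\sections{\Lambda^k L\dual}$ and this algebra is generated in degrees $0$ and $1$, it suffices to check the claimed bidegree shifts on functions $f\in\Omega^{0,0}$ and on one-forms, i.e.\ on $\Omega^{1,0}$ and $\Omega^{0,1}$ separately; the general case for $\Omega^{u,v}$ then follows because a product of generators of total bidegree $(u,v)$ is sent by the Leibniz rule to a sum of terms, each obtained by applying $d_L$ to one generator and leaving the rest untouched, so the bidegree shift of each summand is exactly the shift recorded on generators.

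First I would record that $d_L f$ for $f\in R$ lies in $\Omega^{1,0}\oplus\Omega^{0,1}$ always (this is the shift $(+1,0)\oplus(0,+1)$, which is already allowed in all three cases), since $(d_L f)(v)=\rho(v)f$ is $R$-linear in $v$ and hence splits according to $v\in A$ or $v\in B$. Next, for $\xi\in\Omega^{1,0}=\sections{p\transpose(\Lambda^1 A\dual)}$, I evaluate $(d_L\xi)(v_0,v_1)=\rho(v_0)(\xi(v_1))-\rho(v_1)(\xi(v_0))-\xi([v_0,v_1])$ on pairs of sections of $A$ and of $B$. Writing $[v_0,v_1]=p[v_0,v_1]+q[v_0,v_1]$, the only place where the Lie subalgebroid hypotheses enter is through the control one has on which of $p[v_0,v_1]$, $q[v_0,v_1]$ can be nonzero when $v_0,v_1$ range over $\sections A$ and $\sections B$: if $A$ is a Lie subalgebroid then $[\sections A,\sections A]\subset\sections A$, killing the $\Omega^{-1,2}$-component of $d_L\xi$ coming from $\Lambda^2 A\dual$, and if in addition $B$ is one then $[\sections B,\sections B]\subset\sections B$ kills the $\Omega^{2,-1}$-component as well. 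The dual computation for $\eta\in\Omega^{0,1}$ is symmetric, interchanging the roles of $A$ and $B$. Assembling these: in case (1) a one-form can land in all of $\Omega^{2,-1}\oplus\Omega^{1,0}\oplus\Omega^{0,1}\oplus\Omega^{-1,2}$ (together with the $\Omega^{1,0}\oplus\Omega^{0,1}$ contribution from the $\rho(v_i)(\xi(v_j))$ terms), in case (2) the $\Omega^{-1,2}$-piece survives but the $\Omega^{2,-1}$-piece is gone when starting from $\Omega^{1,0}$ — and one must separately check that starting from $\Omega^{0,1}$ in case (2) still only produces bidegrees inside $\Omega^{1,0}\oplus\Omega^{0,1}\oplus\Omega^{-1,2}$ (it does: the term that could go to $\Omega^{2,-1}$ would require $q[v_0,v_1]$ with both $v_i\in\sections A$, which vanishes) — and in case (3) both extreme pieces vanish.

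The main obstacle, such as it is, is purely notational: one has to be careful that $\Omega^{u,v}$ is defined via the \emph{pullbacks} $p\transpose$ and $q\transpose$ of forms on $A$ and $B$, so that $\Omega^{\bullet,\bullet}$ is genuinely a bigraded algebra with $\Omega^{u,v}\cdot\Omega^{u',v'}\subset\Omega^{u+u',v+v'}$ and $\bigoplus_{u+v=k}\Omega^{u,v}=\sections{\Lambda^k L\dual}$, and that $d_L$ applied to a generator of $\Omega^{1,0}$ evaluated on mixed arguments really does decompose into the four (resp.\ three, resp.\ two) listed bidegree components with no cross-terms — this is where the antisymmetry of $d_L\xi$ and the decomposition $L=A\oplus B$ of each argument slot conspire to give exactly the stated pattern. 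Once the generator case is in hand, the derivation property closes the argument with no further work, so I would keep the writeup to the generator computation plus one sentence invoking Leibniz.
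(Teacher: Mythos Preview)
Your strategy---reduce to generators via the Leibniz rule and track bidegrees by evaluating on pairs from $\sections{A}\times\sections{A}$, $\sections{A}\times\sections{B}$, $\sections{B}\times\sections{B}$---is correct and is exactly what the paper has in mind when it calls this ``an immediate consequence of the definitions'' (the paper gives no further proof).

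There is, however, a swap in your bookkeeping that you should clean up. For $\xi\in\Omega^{1,0}$ the only ``extreme'' shift available is $(-1,+2)$, landing $d_L\xi$ in $\Omega^{0,2}$; its value on $b_0,b_1\in\sections{B}$ is $-\xi(p[b_0,b_1])$, so it is killed by \emph{$B$} being a Lie subalgebroid, not $A$. Symmetrically, for $\eta\in\Omega^{0,1}$ the shift $(+2,-1)$, landing in $\Omega^{2,0}$, equals $-\eta(q[a_0,a_1])$ on $a_0,a_1\in\sections{A}$ and is killed by $A$ being a subalgebroid---which you do state correctly in your parenthetical remark. But your earlier sentence ``if $A$ is a Lie subalgebroid \ldots\ killing the $\Omega^{-1,2}$-component of $d_L\xi$'' has the roles reversed: $A$ being closed under the bracket has no effect whatsoever on $d_L\xi$ when $\xi\in\Omega^{1,0}$. (The companion claim ``$B$ a subalgebroid kills the $\Omega^{2,-1}$-component of $d_L\xi$'' is vacuous rather than wrong, since that shift would land in $\Omega^{3,-1}=0$ anyway.) It would also help to stop writing ``the $\Omega^{-1,2}$-component'' when you mean a bidegree \emph{shift} rather than an absolute bidegree; cleaner is to decompose $d_L=d_L^{+2,-1}+d_L^{+1,0}+d_L^{0,+1}+d_L^{-1,+2}$ as operators and then record: $A$ a subalgebroid $\Leftrightarrow d_L^{+2,-1}=0$, and $B$ a subalgebroid $\Leftrightarrow d_L^{-1,+2}=0$. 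With that fix the argument is complete.
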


Now let $E\xto{\varpi}M$ be a vector bundle over $\KK$.
The traditional description of a (linear) $L$-connection on $E$ is in terms of a \emph{covariant derivative} 
\[ \sections{L}\times\sections{E}\to\sections{E}: (l,e)\mapsto \nabla_l e \] 
characterized by the following two properties: 
\begin{gather}
\nabla_{f\cdot l} e=f\cdot \nabla_l e , \label{Molenbeek} \\ 
\nabla_l (f\cdot e)=\rho(l)f\cdot e+f\cdot\nabla_l e \label{Anderlecht}
,\end{gather}
for all $l\in\sections{L}$, $e\in\sections{E}$, and $f\in R$.

\begin{remark}
An $L$-connection $\nabla$ on $E$ induces a covariant derivative 
\[ \nabla:\sections{L}\times\sections{S(E)}\to\sections{S(E)} \]
through the Leibniz rule
\[ \nabla_l (b_1\odot\cdots\odot b_n) = \sum_{k=1}^n 
b_1\odot\cdots\odot b_{k-1}\odot\nabla_l b_k\odot b_{k+1}\odot\cdots\odot b_n ,\] 
for all $l\in\sections{L}$ and $b_1,\dots,b_n\in\sections{E}$.
\end{remark}

\begin{remark}
\label{Amalfi}
A covariant derivative 
\[ \nabla:\sections{L}\times\sections{S(E)}\to\sections{S(E)} \] 
induces a covariant derivative 
\[ \nabla:\sections{L}\times\sections{\hat{S}(E^\vee)}\to\sections{\hat{S}(E^\vee)} \] 
through the relation
\[ \rho(l)\duality{\sigma}{s}=\duality{\nabla_l\sigma}{s}+\duality{\sigma}{\nabla_l s} \] 
for all $l\in\sections{L}$, $s\in\sections{SE}$, 
and $\sigma\in\sections{\hat{S}(E^\vee)}$. 
\end{remark}

A \emph{representation of a Lie algebroid} $L$ on a vector bundle $E\to M$ is 
a flat $L$-connection $\nabla$ on $E$, i.e.\ a covariant derivative 
$\nabla:\sections{L}\times\sections{E}\to\sections{E}$ satisfying 
\begin{equation}\label{Jette}
\nabla_{a_1}\nabla_{a_2} e-\nabla_{a_2}\nabla_{a_1} e=\nabla_{\lie{a_1}{a_2}}e ,
\end{equation} 
for all $a_1,a_2\in\sections{L}$ and $e\in\sections{E}$. 
A vector bundle endowed with a representation of the Lie algebroid $L$ is called an \emph{$L$-module}. 
More generally, given a left $R$-module $\mathscr{M}$, by an {\em infinitesimal action} of $L$ on
$\mathscr{M}$, we mean a $\KK$-bilinear map $\nabla:\sections{L}\times\mathscr{M}\to\mathscr{M}$, $(a,e)\mapsto\nabla_a e$ 
satisfying Equations~\eqref{Molenbeek}, \eqref{Anderlecht}, and~\eqref{Jette}. 
In other words, $\nabla$ is a representation of the Lie--Rinehart algebra $(\sections{L},R)$ \cite{MR0154906}.

\begin{example}[\cite{MR3439229}]
\label{Auderghem}
Let $(L,A)$ be a Lie pair.
The \emph{Bott representation} of $A$ on the quotient $L/A$ is the flat connection defined by 
\[ \nabla^{\Bott}_a q(l)=q\big(\lie{a}{l}\big), \quad\forall a\in\sections{A},l\in\sections{L} ,\] 
where $q$ denotes the canonical projection $L\onto L/A$. 
Thus the quotient $L/A$ of a Lie pair $(L,A)$ is an $A$-module. 
\end{example}

The Chevalley--Eilenberg covariant differential
associated to a representation $\nabla$ of a Lie algebroid $L\to M$
of rank $n$ on a vector bundle $E\to M$ is the operator
\[ d_L^{\nabla}: \sections{\Lambda^k L^\vee\otimes E}\to\sections{\Lambda^{k+1} L^\vee\otimes E} \]
that takes a section $\omega\otimes e$ of
$\Lambda^k L^\vee\otimes B$ to
\[ d_L^{\nabla}(\omega\otimes e)=(d_L\omega)\otimes e
+\sum_{j=1}^{n}(\nu_j\wedge\omega)\otimes \nabla_{v_j}e ,\]
where $v_1,v_2,\dots,v_n$ and $\nu_1,\nu_2,\dots,\nu_n$
are any pair of dual local frames for the vector bundles $L$ and $L^\vee$.
Because the connection $\nabla$ is flat, $d_L^{\nabla}$ is a coboundary 
operator: $d_L^{\nabla}\circ d_L^{\nabla}=0$.
	
\subsection{Torsion-free connections}
\label{eq:dog}

Let $(L,A)$ be a pair of Lie algebroids over $\KK$. 
Consider the short exact sequence of vector bundles 
\begin{equation}\label{Sydney} 
\begin{tikzcd}
0 \arrow{r} & A \arrow{r}{i} & L \arrow{r}{q} & L/A \arrow{r} & 0
\end{tikzcd}
.\end{equation}

An $L$-connection $\nabla$ on $L/A$ is said to extend 
the Bott $A$-representation on $L/A$ (see Example~\ref{Auderghem}) if 
\[ \nabla_{i(a)} q(l)=\nabla^{\Bott}_a q(l)
=q\big(\lie{i(a)}{l}\big) ,\quad\forall a\in\sections{A},l\in\sections{L} .\]

Given an $L$-connection $\nabla$ on $L/A$,
its torsion is the bundle map $\torsion:\Lambda^2 L\to L/A$ defined by 
\[ \torsion(l_1,l_2)=\nabla_{l_1}q(l_2)-\nabla_{l_2}q(l_1)-q\big(\lie{l_1}{l_2}\big), 
\quad\forall l_1,l_2\in\sections{L} .\] 
If $\nabla$ is an $L$-connection on $L/A$
extending the Bott $A$-representation on $L/A$,
its torsion descends to a bundle map 
\[ \beta^\nabla:\Lambda^2(L/A)\to L/A ,\]
making the diagram 
\[ \begin{tikzcd}
\Lambda^2 L \arrow{d}[swap]{q} \arrow{r}{\torsion} & L/A \\ \Lambda^2 (L/A) \arrow{ru} [swap]{\beta^\nabla} &
\end{tikzcd} \]
commute. 
According to~\cite[Lemma~5.2]{arXiv:1408.2903}, 
if $\nabla$ is torsion-free, 
it must be an extension of the Bott $A$-representation on $L/A$.
Torsion-free $L$-connections on $L/A$ always exist 
--- see~\cite[Proposition~5.3]{arXiv:1408.2903}.

\subsection{Universal enveloping algebra of a Lie algebroid}

Let $L$ be a Lie $\KK$-algebroid over a smooth manifold $M$ 
and let $R$ denote the algebra of smooth functions on $M$ taking values in $\KK$. 
By $\enveloping{L}$ we denote the universal enveloping algebra 
of the Lie algebroid $L$ --- see~\cite{MR0154906}.
There is a natural ascending filtration on $\enveloping{L}$:
\begin{equation}\label{Watermael}
\cdots \into \mathcal{U}^{\leqslant n-1}(L) \into \mathcal{U}^{\leqslant n}(L) 
\into \mathcal{U}^{\leqslant n+1}(L) \into \cdots 
\end{equation} 

When the base $M$ of the Lie algebroid $L$ is the one-point space 
so that the only fiber of the vector bundle $L$ is a Lie algebra $\mathfrak{h}$, 
the universal enveloping algebra of the Lie algebroid 
is the universal enveloping algebra of the Lie algebra $\mathfrak{h}$. 
When the Lie algebroid $L$ is the tangent bundle $T_M\to M$, 
its universal enveloping algebra $\enveloping{L}$ is the algebra of differential operators on $M$. 
In general, when $L$ is a Lie algebroid over $\RR$, its universal enveloping algebra $\enveloping{L}$ 
can canonically be identified with the algebra of source-fiberwise differential operators on $\groupoid{L}$ 
invariant under left translations \cite{MR1747916} 
--- here $\groupoid{L}$ is any local Lie groupoid with Lie algebroid $L$.
Similarly, $\bigotimes^{\bullet}_R\enveloping{L}$ can be identified 
with the left invariant polydifferential operators on $\groupoid{L}$. 	

The universal enveloping algebra $\enveloping{L}$ of the Lie algebroid $L\to M$ 
is a coalgebra over $R$ --- see~\cite{MR1815717}. 
Its comultiplication
\[ \Delta:\enveloping{L}\to\enveloping{L}\otimes_R\enveloping{L} \] 
is compatible with its filtration \eqref{Watermael} and characterized by the identities 
\begin{gather*}
\Delta(1)=1\otimes 1; \\ 
\Delta(b)=1\otimes b+b\otimes 1, \quad \forall b\in \sections{L}; \\ 
\Delta(u\cdot v)=\Delta(u)\cdot\Delta(v), \quad \forall u,v\in\enveloping{L} ,
\end{gather*}
where $1\in R$ denotes the constant function on $M$ with value $1$ 
while the symbol $\cdot$ denotes the multiplication in $\enveloping{L}$. 
We refer the reader to~\cite{MR1815717} for the precise meaning of the last equation above. 
Explicitly, we have
\begin{multline*} 
\Delta(b_1\cdot b_2\cdot\cdots\cdot b_n)= 1\otimes(b_1\cdot b_2\cdot\cdots\cdot b_n) 
+ \sum_{\substack{p+q=n \\ p,q\in\NN}}\sum_{\sigma\in\shuffle{p}{q}} 
(b_{\sigma(1)}\cdot\cdots\cdot b_{\sigma(p)}) \otimes (b_{\sigma(p+1)}\cdot\cdots\cdot b_{\sigma(n)}) \\
+ (b_1\cdot b_2\cdot\cdots\cdot b_n)\otimes 1
,\end{multline*} 
for all $b_1,\dots,b_n\in\sections{L}$. 

\subsection{Poincaré--Birkhoff--Witt isomorphisms}

Let $(L,A)$ be a Lie pair over $\KK$. 
Writing $\enveloping{L}\sections{A}$ for the left ideal of $\enveloping{L}$ generated by $\sections{A}$, 
the quotient $\frac{\enveloping{L}}{\enveloping{L}\sections{A}}$ is automatically a filtered $R$-coalgebra since 
\[ \Delta\big(\enveloping{L}\sections{A}\big)\subseteq \enveloping{L}\otimes_R \big(\enveloping{L}\sections{A}\big) 
+ \big(\enveloping{L}\sections{A}\big) \otimes_R \enveloping{L} \]
and the filtration \eqref{Watermael} on $\enveloping{L}$ descends to a filtration 
\[ \cdots \into \left(\frac{\enveloping{L}}{\enveloping{L}\sections{A}}\right)^{\leqslant n-1} 
\into \left(\frac{\enveloping{L}}{\enveloping{L}\sections{A}}\right)^{\leqslant n} 
\into \left(\frac{\enveloping{L}}{\enveloping{L}\sections{A}}\right)^{\leqslant n+1} \into \cdots \]
of $\frac{\enveloping{L}}{\enveloping{L}\sections{A}}$.

Likewise, deconcatenation defines an $R$-coalgebra structure on $\sections{S(L/A)}$.
The comultiplication 
\[ \Delta:\sections{S(L/A)}\to\sections{S(L/A)}\otimes_R\sections{S(L/A)} \] is given by 
\begin{multline*} 
\Delta(b_1\odot b_2\odot\cdots\odot b_n)=
1\otimes(b_1\odot b_2\odot\cdots\odot b_n) \\ 
\shoveright{+ \sum_{\substack{p+q=n \\ p,q\in\NN}}\sum_{\sigma\in\shuffle{p}{q}} 
(b_{\sigma(1)}\odot\cdots\odot b_{\sigma(p)}) \otimes (b_{\sigma(p+1)}\odot\cdots\odot b_{\sigma(n)})} \\
+ (b_1\odot b_2\odot\cdots\odot b_n)\otimes 1
,\end{multline*} 
for all $n\in\NN$ and $b_1,\dots,b_n\in\sections{L/A}$. 
The symbol $\odot$ denotes the symmetric product in $\sections{S(L/A)}$. 

The following theorem, which was obtained in~\cite{arXiv:1408.2903}, is an
extension of the classical Poincaré--Birkhoff--Witt isomorphism to Lie pairs. 

\begin{theorem}[{\cite[Theorem 2.1]{arXiv:1408.2903}}]
\label{Nairobi}
Let $(L,A)$ be a Lie pair. 
Given a splitting $j:L/A\to L$ of the short exact sequence $0\to A \to L \to L/A \to 0$, 
and a $L$-connection $\nabla$ on $L/A$ extending the Bott $A$-representation, 
there exists a unique isomorphism of filtered $R$-coalgebras 
\[ \pbw^{}:\sections{S(L/A)}\to\tfrac{\enveloping{L}}{\enveloping{L}\sections{A}} \] 
satisfying 
\begin{align}
\pbw^{}(f) &= f, \label{pbwzero} \\ 
\pbw^{}(b) &= j(b), \label{pbwun} \\
\pbw^{}(b^{n+1}) &= j(b)\cdot\pbw^{}(b^n)-\pbw^{}\big(\nabla_{j(b)}(b^n)\big) \label{pbwpower}
\end{align}
for all $f\in R$, $b\in \sections{L/A}$, and $n\in\NN$.
\end{theorem}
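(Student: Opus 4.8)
The plan is to construct $\pbw^{\nabla,j}$ by prescribing its values on the PBW-type basis and then verifying that the resulting $R$-linear map is a filtered coalgebra isomorphism. First I would fix a local frame $(b_i)$ of $L/A$ and, using the multi-index notation from the preliminaries, define $\pbw^{\nabla,j}$ on symmetric monomials $b_{i_1}\odot\cdots\odot b_{i_n}$ by a recursion that symmetrizes the one-variable rule~\eqref{pbwpower}: namely set
\[
\pbw^{\nabla,j}(b_0\odot b_1\odot\cdots\odot b_n)
=\frac{1}{n+1}\sum_{k=0}^{n}\Bigl(j(b_k)\cdot\pbw^{\nabla,j}(b_0\odot\cdots\widehat{b_k}\cdots\odot b_n)
-\pbw^{\nabla,j}\bigl(\nabla_{j(b_k)}(b_0\odot\cdots\widehat{b_k}\cdots\odot b_n)\bigr)\Bigr),
\]
together with $\pbw^{\nabla,j}(f)=f$ and $\pbw^{\nabla,j}(b)=j(b)$. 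One checks by induction on $n$ that this is well defined (independent of the chosen frame and of the orderings), $R$-linear in the sense appropriate to a coalgebra map, and sends the symmetric degree $\leqslant n$ part into $\mathcal{U}^{\leqslant n}(L)$ modulo $\enveloping{L}\sections{A}$; the leading symbol of $\pbw^{\nabla,j}(b_0\odot\cdots\odot b_n)$ is $j(b_0)\cdots j(b_n)$ modulo lower order, which by Lemma~\ref{Pigalle} and the classical PBW theorem for $\enveloping{L}$ shows that $\pbw^{\nabla,j}$ is filtered and that its associated graded is the obvious isomorphism $\sections{S(L/A)}\xto{\sim}\Gr\tfrac{\enveloping{L}}{\enveloping{L}\sections{A}}$, hence $\pbw^{\nabla,j}$ itself is a bijection.

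Next I would verify that $\pbw^{\nabla,j}$ intertwines the two comultiplications. Since both $\Delta$ on $\sections{S(L/A)}$ (deconcatenation) and $\Delta$ on $\tfrac{\enveloping{L}}{\enveloping{L}\sections{A}}$ (induced from $\enveloping{L}$) are determined by their values on $R$ and $\sections{L/A}$ together with compatibility with products where products make sense, the claim $\Delta\circ\pbw^{\nabla,j}=(\pbw^{\nabla,j}\otimes\pbw^{\nabla,j})\circ\Delta$ reduces — via the same induction on symmetric degree — to: (i) the base cases, which are immediate from $\Delta(j(b))=1\otimes j(b)+j(b)\otimes 1$ and $\Delta(f)=f(1\otimes 1)$; and (ii) an identity expressing $\Delta$ applied to the recursion, using that $\Delta$ is a morphism for left multiplication by $\sections{L}$ in the precise sense of~\cite{MR1815717} and that $\Delta$ commutes with the $L$-action $\nabla$ on $S(L/A)$ and with the canonical $L$-coderivation on $\tfrac{\enveloping{L}}{\enveloping{L}\sections{A}}$. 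The combinatorics of the shuffle sums in the two explicit comultiplication formulas match the $\frac{1}{n+1}\sum_k$ symmetrization exactly, so this is a bookkeeping verification.

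Finally, uniqueness: any filtered $R$-coalgebra isomorphism satisfying~\eqref{pbwzero}, \eqref{pbwun}, \eqref{pbwpower} agrees with $\pbw^{\nabla,j}$ on $R$ and on $\sections{L/A}$, and~\eqref{pbwpower} forces its value on every $b^{n}$; since the divided-power-type elements $b^{n}$ for $b\in\sections{L/A}$ span $\sections{S(L/A)}$ over $R$ (polarization identity, valid since $\KK$ has characteristic zero), the two maps coincide. The step I expect to be the main obstacle is (ii) above — checking that the symmetrized recursion is compatible with comultiplication — because it requires carefully unwinding the $R$-coalgebra (not $\KK$-coalgebra) structure on $\tfrac{\enveloping{L}}{\enveloping{L}\sections{A}}$ and the precise meaning of $\Delta(u\cdot v)=\Delta(u)\cdot\Delta(v)$ from~\cite{MR1815717}, and matching it term-by-term against the shuffle decomposition; establishing that the recursion is frame-independent is a secondary technical point handled by the same induction. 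Everything else (well-definedness, filtered-ness via Lemma~\ref{Pigalle}, bijectivity via associated graded, uniqueness via polarization) is standard once the recursion is written down.
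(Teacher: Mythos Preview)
Your approach is essentially the same as the paper's. Note that the paper does not give a self-contained proof of this theorem: it cites~\cite{1408.2903} for the full argument, records the symmetrized recursion you wrote as Equation~\eqref{Rome}, observes that \eqref{pbwzero}, \eqref{pbwun}, \eqref{Rome} together determine a unique $R$-linear map, and then proves Lemma~\ref{Stockholm} (your leading-symbol claim, via Lemma~\ref{Pigalle}) and Corollary~\ref{Duroc}. Your outline of the coalgebra compatibility and bijectivity via the associated graded fills in exactly what the paper defers to the reference, and your uniqueness argument by polarization is the standard one.
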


\begin{remark} 
Equation~\eqref{pbwpower} is equivalent to 
\begin{multline}\label{Rome}
\pbw^{}(b_0 \odot \cdots \odot b_n) =\tfrac{1}{n+1}\sum_{i=0}^{n} \Big( j(b_i) \cdot 
\pbw^{}(b_0 \odot \cdots \odot \widehat{b_i} \odot \cdots \odot b_n ) \\ -
\pbw^{}\big(\nabla_{j(b_i)} ( b_0 \odot \cdots \odot \widehat{b_i} \odot \cdots \odot b_n ) \big) \Big) 
\end{multline}
for all $b_0,\dots,b_n \in \sections{L/A}$.
\end{remark}

It is immediate that Equations~\eqref{pbwzero}, \eqref{pbwun}, and~\eqref{Rome} 
together define inductively a unique $R$-linear map $\pbw^{}$. 

The following lemma will be needed later on.

\begin{lemma}\label{Glaciere} 
For all $Y,Z$ in $\sections{L/A}$, we have 
\[ \pbw(Y\odot Z) = j(Y)\cdot j(Z) -j\big(\nabla_{j(Y)} Z\big) 
+\tfrac{1}{2}\ j\circ\beta^\nabla(Y,Z) .\] 
\end{lemma}

\begin{remark}
When $L=T_M$ and $A$ is the trivial Lie subalgebroid of $L$ of rank 0, 
the $\pbw^{}$ map of Theorem~\ref{Nairobi} is the inverse of the so-called `complete symbol map,' 
which is an isomorphism from the space $\enveloping{T_M}$ of differential operators on $M$ 
to the space $\sections{S(T_M)}$ of fiberwise polynomial functions on $T^\vee_M$. 
The complete symbol map was generalized to arbitrary Lie algebroids over $\RR$ 
by Nistor--Weinstein--Xu~\cite{MR1687747}. 
It played an important role in quantization theory \cite{MR706215,MR1687747, MR1231231, MR1427063}.
\end{remark}

\section{Fedosov dg manifolds for Lie pairs}

Given a Lie pair $(L,A)$ with quotient $B=L/A$, 
the graded manifold $L[1]\oplus B$ can be endowed 
with a homological vector field. 
We give two equivalent constructions of this homological vector field.

\subsection{First construction by way of the PBW map}

Making use of the Poincaré--Birkhoff--Witt isomorphism $\pbw$ 
of Theorem~\ref{Nairobi},
one can endow the graded manifold $L[1]\oplus B$ 
with a homological vector field.

Recall that every choice of a splitting $j:B\to L$ 
of the short exact sequence of vector bundles \eqref{Sydney}
and an $L$-connection $\nabla$ on $B$ extending the Bott $A$-connection
determines a Poincaré--Birkhoff--Witt map 
\[ \pbw^{}:\sections{SB}\to\tfrac{\enveloping{L}}{\enveloping{L}\sections{A}} ,\] 
which is an isomorphism of filtered $R$-coalgebras according to Theorem~\ref{Nairobi}. 

Being a quotient of the universal enveloping algebra $\enveloping{L}$ by a left ideal, 
the $R$-coalgebra $\tfrac{\enveloping{L}}{\enveloping{L}\sections{A}}$ is naturally a left $\enveloping{L}$-module. 
Hence $\tfrac{\enveloping{L}}{\enveloping{L}\sections{A}}$ is endowed with 
a canonical infinitesimal $L$-action by coderivations. 
Pulling back this infinitesimal action through $\pbw$, 
we obtain an infinitesimal $L$-action on $\sections{S(B)}$ by coderivations. 
The latter defines a \emph{flat} $L$-connection $\nabla^\lightning$ on $S(B)$: 
\begin{equation}\label{Vermont}
\nabla^\lightning_l(s)=\pbw^{-1}\big(l\cdot\pbw(s)\big) 
,\end{equation} 
for all $l\in\sections{L}$ and $s\in\sections{S B}$.

The $L$-connection $\nabla^\lightning$ on $S(B)$ induces
an $L$-connection on the dual bundle $\hat{S}(B^\vee)$ 
--- see Remark~\ref{Amalfi}.
We denote the corresponding Chevalley--Eilenberg differential by
\begin{equation}
d_L^{\nabla^\lightning}: \sections{\Lambda^\bullet L^\vee\otimes\hat{S}(B^\vee)} 
\to\sections{\Lambda^{\bullet+1} L^\vee\otimes\hat{S}(B^\vee)}
.\end{equation}
Since the covariant derivative 
\[ \nabla^\lightning_l:\sections{SB}\to\sections{SB} \] 
is a \emph{co}derivation of $\sections{SB}$ for all $l\in\sections{L}$,
the covariant derivative 
\[ \nabla^\lightning_l:\sections{\hat{S}(B^\vee)}\to\sections{\hat{S}(B^\vee)} \]
is a derivation of the symmetric algebra $\sections{\hat{S}(B^\vee)}$.
Note however that $\nabla^\lightning_l$ need not be a derivation 
of $\sections{SB}$ for any $l\in\sections{L}$.
Therefore, the operator $d_L^{\nabla^\lightning}$ on 
$\sections{\Lambda^\bullet L^\vee\otimes\hat{S}(B^\vee)}$
is a derivation of degree $(+1)$ satisfying 
$d_L^{\nabla^\lightning}\circ d_L^{\nabla^\lightning}=0$, 
i.e.\ it is a homological vector field on $L[1]\oplus B$.

\begin{proposition}\label{pro:PBW}
Given a Lie pair $(L,A)$ with quotient $B=L/A$, the choice of 
(1) a splitting $j:L/A\to L$ of the short exact sequence $0\to A \to L \to B \to 0$
and (2) an $L$-connection $\nabla$ on $B$ extending the Bott representation 
determines an operator $d_L^{\nabla^\lightning}$ as above 
making $(L[1]\oplus B,d_L^{\nabla^\lightning})$ a dg manifold.
\end{proposition}

\begin{remark}
The Kapranov dg manifolds of~\cite[Theorem 5.7]{arXiv:1408.2903} 
inspired the construction of the dg manifold of Proposition~\ref{pro:PBW}.
Indeed, the Kapranov dg manifold $(A[1]\oplus B,D)$ constructed 
in~\cite[Theorem 5.7]{arXiv:1408.2903} is a dg submanifold 
of the dg manifold $(L[1]\oplus B,d_L^{\nabla^\lightning})$ 
of Proposition~\ref{pro:PBW} as can be readily observed by comparing 
Equation~\eqref{Vermont} with~\cite[Equation~(46)]{arXiv:1408.2903}.
\end{remark}

We will see in Theorem~\ref{thm:Lodz} that, 
when $\nabla$ is torsion-free,
the homological vector field $d_L^{\nabla^\lightning}$ 
is exactly the homological vector field $Q$ 
constructed by Fedosov's iteration method 
as described in the next section.

\subsection{Second construction by way of Fedosov's iteration method}
\label{section:pek}

First we need to introduce some operators.

Let $r$ be the rank of the bundle $B$, 
let $q:L\to B$ be the canonical projection, 
and let $q\transpose:B^\vee\to L^\vee$ denote the dual map.
Given a local frame $\{\chi_k\}_{k=1}^r$ for the vector bundle $B^\vee$
and a multi-index $J=(J_1,J_2,\cdots,J_r)\in\NN_0^r$, 
we use the symbol $\chi^J$ to denote 
\[ \chi^J=\underset{J_1 \text{factors}}{\underbrace{\chi_1\odot\cdots\odot\chi_1}}
\odot\underset{J_2 \text{factors}}{\underbrace{\chi_2\odot\cdots\odot\chi_2}} 
\odot\cdots\odot\underset{J_r \text{factors}}{\underbrace{\chi_r\odot\cdots\odot\chi_r}} .\]
The multi-index $(0,\cdots,0,1,0,\cdots,0)$ having 
its single nonzero entry in $m$-th position is denoted $e_m$.

Consider the endomorphism $\delta$ of the vector bundle 
$\Lambda L^\vee\otimes\hat{S}(B^\vee)$ defined by 
\[ \delta(\omega\otimes\chi^J)=\sum_{m=1}^r \big(q\transpose(\chi_m)\wedge\omega\big)\otimes J_m\,\chi^{J-e_m} ,\] 
for all $\omega\in\Lambda L^\vee$ and $J\in\NN_0^r$
--- we declare that $J_m\chi^{J-e_m}=0$ if $J_m=0$.

The operator $\delta$ is a derivation of degree $(+1)$ 
of the bundle of graded commutative algebras 
$\Lambda^\bullet L^\vee\otimes\hat{S}(B^\vee)$ 
and satisfies $\delta^2=0$. 
The resulting cochain complex
\[ \begin{tikzcd}[column sep=small] 
\cdots \arrow{r}{} & \Lambda^{n-1} L^\vee \otimes\hat{S}(B^\vee) \arrow{r}{\delta} &
\Lambda^{n} L^\vee \otimes\hat{S}(B^\vee) \arrow{r}{\delta} & 
\Lambda^{n+1} L^\vee \otimes\hat{S}(B^\vee) \arrow{r}{} & \cdots 
\end{tikzcd} \] 
deformation retracts onto the trivial complex
\[ \begin{tikzcd} 
\cdots \arrow{r}{} & \Lambda^{n-1} A^\vee \arrow{r}{0} & 
\Lambda^{n} A^\vee \arrow{r}{0} & \Lambda^{n+1} A^\vee \arrow{r}{} & \cdots 
\end{tikzcd} \] 

Choose a splitting $i\circ p+j\circ q=\id_L$ of the short exact sequence 
\begin{equation} \label{Split} 
\begin{tikzcd} 
0 \arrow{r} & A \arrow{r}{i} & L \arrow[bend left, dashed]{l}{p} \arrow{r}{q} & 
B \arrow{r} \arrow[bend left, dashed]{l}{j} & 0 
\end{tikzcd} 
\end{equation} 
and its dual 
\[ \begin{tikzcd} 
0 \arrow{r} & B^\vee \arrow{r}{q\transpose} & 
L^\vee \arrow[bend left, dashed]{l}{j\transpose} \arrow{r}{i\transpose} 
& A^\vee \arrow{r} \arrow[bend left, dashed]{l}{p\transpose} & 0 
\end{tikzcd} .\]

Consider the chain maps
\begin{gather*} 
\tau:\Lambda^\bullet A^\vee \to\Lambda^\bullet L^\vee \otimes\hat{S}(B^\vee)
\\ \intertext{and} 
\sigma:\Lambda^\bullet L^\vee \otimes\hat{S}(B^\vee)\to\Lambda^\bullet A^\vee
\end{gather*}
respectively defined by 
\[ \tau(\alpha)=p\transpose(\alpha)\otimes 1 ,\] 
for all $\alpha\in\Lambda^\bullet A^\vee$, and 
\begin{equation}\label{Nuuk} \sigma(\omega\otimes\chi^J)=\begin{cases} \omega\otimes\chi^J & 
\text{if } v=0 \text{ and } \abs{J}=0 \\ 
0 & \text{otherwise,} \end{cases} \end{equation}
for all $\omega\in p\transpose(\Lambda^u A^\vee)\wedge q\transpose(\Lambda^v B^\vee)\subset\Lambda^{u+v}L^\vee$ 
and all multi-indices $J\in\NO^r$.

Finally, denoting any pair of dual local frames for $B$ and $B^\vee$ 
by $(\partial_i)_{i\in\{1,\dots,r\}}$ and $(\chi_j)_{j\in\{1,\dots,r\}}$ respectively,
consider the operator 
\[ h:\Lambda^{\bullet}L^\vee\otimes\hat{S}(B^\vee) \to\Lambda^{\bullet-1}L^\vee\otimes\hat{S}(B^\vee) \] 
defined by 
\begin{equation}\label{Skopje} 
h(\omega\otimes\chi^J)=\begin{cases} 
\frac{1}{v+\abs{J}}\sum_{k=1}^r (\interior{j(\partial_k)}\omega)\otimes\chi^{J+e_k} & \text{if } v\geqslant 1 \\ 
0 & \text{if } v=0 \end{cases} 
\end{equation}
for all $\omega\in p\transpose(\Lambda^u A^\vee)\wedge q\transpose(\Lambda^v B^\vee)\subset\Lambda^{u+v}L^\vee$ 
and all multi-indices $J\in\NO^r$.

\begin{remark}
Unlike $\delta$, the operator $h$ is \emph{not} a derivation 
of the bundle of graded commutative algebras 
$\Lambda^\bullet L^\vee\otimes\hat{S}(B^\vee)$. 
\end{remark}

The maps $\delta$, $\sigma$, $\tau$, and $h$ respect 
the exhaustive, complete, descending filtrations 
\begin{gather*} 
\mathcal{F}_0\supset\mathcal{F}_1\supset\mathcal{F}_2\supset\mathcal{F}_3\supset\cdots 
\\ \intertext{and} 
\mathscr{F}_0\supset\mathscr{F}_1\supset\mathscr{F}_2\supset\mathscr{F}_3\supset\cdots 
\end{gather*}
on $\Lambda^\bullet A^\vee$ and $\Lambda^\bullet L^\vee\otimes\hat{S}(B^\vee)$
defined by 
\begin{gather*} 
\mathcal{F}_m=\bigoplus_{k\geqslant m}\Lambda^k A^\vee 
\\ \intertext{and} 
\mathscr{F}_m=\prod_{k+p\geqslant m}\big(\Lambda^k L^\vee\otimes S^p(B^\vee)\big) 
.\end{gather*} 

Most significantly, the maps $\delta$, $\sigma$, $\tau$, and $h$ 
satisfy the following five identities: 
\begin{equation} \label{lem:contraction}
\sigma\tau=\id, \qquad \id-\tau\sigma= h\delta+\delta h, 
\qquad h\tau=0, \qquad \sigma h=0, \qquad h^2 =0 
.\end{equation}

Hence, we have proved the following 
\begin{proposition}
\label{Geneva}
The maps $\delta$, $h$, $\sigma$, and $\tau$ defined above determine a filtered contraction
\[ \begin{tikzcd}
(\Lambda^\bullet A^\vee,0) \arrow[r, "\tau", shift left] & 
(\Lambda^\bullet L^\vee\otimes\hat{S}(B^\vee),-\delta) \arrow[l, "\sigma", shift left] 
\arrow[loop, "h", out=5, in=-5, looseness = 3]
\end{tikzcd} .\]
\end{proposition}

A straighforward computation leads to the following

\begin{lemma}\label{Brescia}
Let $(L,A)$ be a Lie pair, 
let $\nabla$ be an $L$-connection on $B$ extending the Bott $A$-representation,
and let $\beta^\nabla$ be its torsion --- see Section~\ref{eq:dog}. 
Then $\beta^\nabla=0$ if and only if 
$\delta d_L^{\nabla}+d_L^{\nabla}\delta=0$.
\end{lemma}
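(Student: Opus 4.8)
The plan is to prove the identity $\delta d_L^\nabla + d_L^\nabla \delta = 0$ by a direct computation, evaluating both operators on a generating set of the algebra $\sections{\Lambda^\bullet L\dual \otimes \hat S B\dual}$ and then extending by the Leibniz rule, after verifying that the anticommutator $\delta d_L^\nabla + d_L^\nabla \delta$ is itself a derivation. First I would observe that $\delta$ is a degree $+1$ derivation with $\delta^2=0$ and $d_L^\nabla$ is a degree $+1$ derivation (the covariant Chevalley--Eilenberg differential), so their graded commutator $\delta d_L^\nabla + d_L^\nabla \delta$ is a degree $+2$ derivation of the graded commutative algebra $\sections{\Lambda^\bullet L\dual \otimes \hat S B\dual}$. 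Hence it suffices to check that it vanishes on algebra generators: the functions $R$, the elements of $\sections{L\dual}$ (sitting in degree $1$), and the elements $\chi$ of $\sections{B\dual}$ (sitting in degree $0$ inside $\hat S B\dual$).

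On $R$ the computation is immediate since $\delta$ kills functions and $d_L^\nabla$ restricted to $R$ is $d_L$, and $\delta d_L f$ lands in $\sections{\Lambda^2 L\dual}\otimes 1$ where $\delta$ again acts by differentiating the (empty) $\hat S B\dual$-part, giving zero; likewise on $\sections{L\dual}$ the symmetric-algebra degree is zero so $\delta$ annihilates these too and one only needs $\delta d_L^\nabla \omega = 0$, which reduces to the same observation since $d_L^\nabla$ on $\Lambda^\bullet L\dual$ (with no $\hat S$-part) is just $d_L$. The one genuinely substantive check is on a section $\chi \in \sections{B\dual}$: here $\delta \chi = q^\top(\chi)\wedge$-type term lives in $\Lambda^1 L\dual \otimes 1$, and $d_L^\nabla \chi \in \sections{L\dual \otimes B\dual}$ is built from the connection $\nabla$ on $B\dual$ (equivalently on $B$). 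One then computes $\delta(d_L^\nabla \chi)$ and $d_L^\nabla(\delta \chi)$ in a local dual frame $\{\partial_k\}$, $\{\chi_k\}$ of $B$, $B\dual$ together with a local frame of $L$, and the sum collapses precisely to the pairing of $\chi$ against the torsion bundle map $\beta^\nabla:\Lambda^2(L/A)\to L/A$ from Equation~\eqref{eq:dog}. Concretely, the mixed terms combine, via the definition~\eqref{Tibet} of $\torsion$ and the factorization through $\beta^\nabla$, into an expression of the form $\sum_{m} (q^\top \chi_m \wedge \text{(something)})\otimes \langle \chi, \beta^\nabla(\cdots)\rangle$-type contribution, so that $(\delta d_L^\nabla + d_L^\nabla \delta)\chi$ equals (up to sign and normalization) the image of $\chi$ under an operator manufactured entirely from $\beta^\nabla$. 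Thus this anticommutator vanishes on all generators if and only if $\beta^\nabla = 0$, and by the derivation property this is equivalent to $\delta d_L^\nabla + d_L^\nabla \delta = 0$ on the whole algebra.

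The main obstacle I expect is the bookkeeping in the generator computation on $\chi \in \sections{B\dual}$: one must carefully track how $d_L^\nabla$ interacts with the splitting maps $j, q$ (the operator $\delta$ uses $q^\top$, while the torsion in~\eqref{Tibet} and the connection both involve $j$), keep the Koszul signs straight when commuting the degree-$1$ factor $q^\top(\chi_m)$ past wedge factors, and recognize that the terms not proportional to the torsion cancel because of the flatness/compatibility of $\nabla$ with the Bott connection along $A$ and because $\delta^2 = 0$ controls the ``pure'' $\delta$-contributions. A clean way to organize this is to first establish the identity on generators without choosing a frame, writing $d_L^\nabla \chi$ via $\langle d_L^\nabla \chi, l\otimes \partial\rangle = \rho(l)\langle\chi,\partial\rangle - \langle \chi, \nabla_l \partial\rangle$ and similarly unwinding $\delta$, then invoking Equation~\eqref{Tibet} to replace the combination $\nabla_{l_1} q(l_2) - \nabla_{l_2} q(l_1) - q[l_1,l_2]$ by $\torsion(l_1,l_2)$, which by construction equals $\beta^\nabla(q(l_1),q(l_2))$. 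Once the generator identity is in hand as ``$(\delta d_L^\nabla + d_L^\nabla\delta)\chi = \pm\,($explicit expression in $\beta^\nabla)$'', both implications of the ``if and only if'' follow: vanishing of $\beta^\nabla$ gives vanishing on generators hence everywhere by the derivation property, and conversely, since $\beta^\nabla$ is recovered from the value on generators (the expression is $R$-linear and faithfully encodes the bundle map $\beta^\nabla$), $\delta d_L^\nabla + d_L^\nabla \delta = 0$ forces $\beta^\nabla = 0$.
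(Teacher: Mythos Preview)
Your approach is correct. The paper states Lemma~\ref{Brescia} without proof, so there is nothing to compare against; your argument via the derivation property of the graded commutator $[\delta,d_L^\nabla]$ and the explicit computation on generators is the natural one, and the key identity $([\delta,d_L^\nabla]\chi)(l_1,l_2)=\langle\chi,\torsion(l_1,l_2)\rangle=\langle\chi,\beta^\nabla(q(l_1),q(l_2))\rangle$ for $\chi\in\sections{B\dual}$ falls out exactly as you describe once one unwinds $d_L(q^\top\chi)$ and $\delta(d_L^\nabla\chi)$ and cancels the anchor terms.

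One small clarification worth making explicit in your write-up: the checks on $R$ and on $\sections{L\dual}$ hold unconditionally (they do not involve $\beta^\nabla$ at all), so the ``only if'' direction rests entirely on the generator $\chi\in\sections{B\dual}$. Since $\chi\mapsto([\delta,d_L^\nabla]\chi)(l_1,l_2)$ recovers the full pairing $\langle\chi,\beta^\nabla(q(l_1),q(l_2))\rangle$ and $q$ is surjective, vanishing of the anticommutator on all such $\chi$ indeed forces $\beta^\nabla=0$. You have this in your last paragraph, but stating it cleanly avoids any appearance that the converse depends on the trivial generator checks.
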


Consider the four maps $\etendu{\delta}$, $\etendu{\sigma}$, $\etendu{h}$, and $\etendu{\tau}$
\[ \begin{tikzcd}
\sections{\Lambda^\bullet A^\vee \otimes B}
\arrow[shift right]{r}[swap]{\etendu{\tau}}
& \sections{\Lambda^{\bullet} L^\vee \otimes\hat{S}(B^\vee)\otimes B} 
\arrow[shift right]{l}[swap]{\etendu{\sigma}}
\arrow[shift left]{r}{\etendu{\delta}} 
& \sections{\Lambda^{\bullet+1} L^\vee \otimes\hat{S}(B^\vee)\otimes B}
\arrow[shift left]{l}{\etendu{h}} 
\end{tikzcd} \]
defined by 
\begin{align*} 
\etendu{\delta}(\omega\otimes\sigma\otimes b)&=\delta(\omega\otimes\sigma)\otimes b
& \etendu{\sigma}(\omega\otimes\sigma\otimes b)&=\sigma(\omega\otimes\sigma)\otimes b
\\ \etendu{h}(\omega\otimes\sigma\otimes b)&=h(\omega\otimes\sigma)\otimes b 
& \etendu{\tau}(\alpha\otimes b)&=\tau(\alpha)\otimes b
,\end{align*} 
for all $\alpha\in\sections{\Lambda A^\vee}$, 
$\omega\in\sections{\Lambda L^\vee}$, $\sigma\in\sections{\hat{S}(B^\vee)}$, 
and $b\in\sections{B}$.

It follows immediately from the identities \eqref{lem:contraction} that 
\[ \etendu{\sigma}\etendu{\tau}=\id, \qquad \id-\etendu{\tau}\etendu{\sigma}
= \etendu{h}\etendu{\delta}+\etendu{\delta}\etendu{h}, 
\qquad \etendu{h}\etendu{\tau}=0, \qquad \etendu{\sigma}\etendu{h}=0, 
\qquad \etendu{h}^2 =0 .\]

We are now ready to present a second construction
of a holomorphic vector field $Q$ on the graded manifold $L[1]\oplus B$, 
which relies on Fedosov's iteration method.

\begin{proposition}\label{strawberry}
Let $(L,A)$ be a Lie pair.
Given a splitting $i\circ p+j\circ q=\id_L$ of the short exact sequence \eqref{Split}
and a torsion-free $L$-connection $\nabla$ on $B$,
there exists a \emph{unique} 1-form valued in 
the formal vertical vector fields on $B$:
\[ X^\nabla\in\sections{L^\vee\otimes
\hat{S}^{\geqslant 2}(B^\vee)\otimes B} \] 
satisfying $\etendu{h}(X^\nabla)=0$ 
and such that the derivation
\[ Q: \sections{\Lambda^\bullet L^\vee\otimes\hat{S}(B^\vee)} 
\to \sections{\Lambda^{\bullet+1} L^\vee\otimes\hat{S}(B^\vee)} \] 
defined by 
\[ Q=-\delta+d_L^\nabla+X^\nabla \] 
satisfies $Q^2=0$. 
Here $X^\nabla$ acts on the algebra 
$\sections{\Lambda^\bullet L^\vee\otimes\hat{S}(B^\vee)}$ 
as a derivation in a natural fashion.
As a consequence, $(L[1]\oplus B, Q)$ is a dg manifold.
\end{proposition}

\begin{proof}
Suppose there exists such a $\xnabla$ 
and consider its decomposition $\xnabla=\sum_{k=2}^{\infty} X_k$, 
where $X_k\in\sections{L^\vee\otimes S^{k}(B^\vee)\otimes B}$. 
Then $Q=-\delta+d_L^\nabla+\xtwo+\xhigher$ 
with $\xhigher=\sum_{k=3}^{\infty} X_k$ and 
\[ \begin{split} 
Q^2 =&\ \delta^2 
- \big(\delta d_L^\nabla+d_L^\nabla\delta\big) 
+ \big\{d_L^\nabla d_L^\nabla-\delta\xtwo-\xtwo\delta\big\} \\ 
&\ + \big\{d_L^\nabla\xnabla+\xnabla d_L^\nabla+{\xnabla}^2 
-\delta\xhigher-\xhigher\delta\big\} \\ 
=&\ \delta^2 - \lie{\delta}{d_L^\nabla} 
+ \big\{ R^\nabla-\lie{\delta}{\xtwo} \big\}
+ \big\{ \lie{d_L^\nabla+\tfrac{1}{2}\xnabla}{\xnabla}
- \lie{\delta}{\xhigher} \big\} 
.\end{split} \] 

Let us write $\Lambda^p\otimes S^q$ for 
$\sections{\Lambda^p L^\vee \otimes S^q B^\vee}$.

Since 
\[ \begin{tikzcd}[column sep=huge,row sep=small] 
& \Lambda^{p+1}\otimes S^{q-1} \\ 
\Lambda^{p}\otimes S^{q} 
\arrow{ru}[near end]{-\delta} 
\arrow{r}[near end]{d_L^\nabla} 
\arrow{rd}[swap, near end]{\xtwo} 
\arrow{rdd}[swap, near end]{\xhigher} 
& \Lambda^{p+1}\otimes S^{q} \\ 
& \Lambda^{p+1}\otimes S^{q+1} \\ 
& \Lambda^{p+1}\otimes \hat{S}^{\geqslant q+2} 
,\end{tikzcd} \]
we have 
\[ \begin{tikzcd}[column sep=huge,row sep=small] 
&& \Lambda^{p+2}\otimes S^{q-2} \\ 
&& \Lambda^{p+2}\otimes S^{q-1} \\ 
\Lambda^{p}\otimes S^{q} 
\arrow{rruu}[near end]{\delta^2} 
\arrow{rru}[swap, near end]{\lie{\delta}{d_L^\nabla}} 
\arrow{rr}[swap, near end]{R^\nabla-\lie{\delta}{\xtwo}} 
\arrow{rrd}[swap, near end]{\lie{d_L^\nabla+\frac{1}{2}\xnabla}{\xnabla}
-\lie{\delta}{\xhigher}}
&& \Lambda^{p+2}\otimes S^{q} \\ 
&& \Lambda^{p+2}\otimes \hat{S}^{\geqslant q+1} 
.\end{tikzcd} \]

Since $\delta^2=0$ and $\lie{\delta}{d_L^\nabla}=0$ (by Lemma~\ref{Brescia}), 
we obtain the commutative diagram 
\[ \begin{tikzcd}[row sep=small]
& \Lambda^{p}\otimes S^{q} 
\arrow{dl}[swap]{R^\nabla-\lie{\delta}{\xtwo}} 
\arrow{dd}{Q^2}
\arrow{dr}{\lie{d_L^\nabla+\frac{1}{2}\xnabla}{\xnabla}
-\lie{\delta}{\xhigher}} & \\ 
\Lambda^{p+2}\otimes S^{q} & & \Lambda^{p+2}\otimes\hat{S}^{\geqslant q+1} \\
& \Lambda^{p+2}\otimes\hat{S}^{\geqslant q} \arrow[two heads]{lu} \arrow[two heads]{ru} & 
\end{tikzcd} \]
The requirement $Q^2=0$ is thus equivalent to the pair of equations 
\begin{align*}
\lie{\delta}{\xtwo} &= R^\nabla \\ 
\lie{\delta}{\xhigher} &= \lie{d_L^\nabla+\tfrac{1}{2}\xnabla}{\xnabla}
\end{align*}

Note that $\etendu{\sigma}(\xtwo)=0$ and $\etendu{\sigma}(\xhigher)=0$ 
since $\xtwo,\xhigher\in\sections{L^\vee\otimes\hat{S}^{\geqslant 2}(B^\vee)\otimes B}$ 
and also that $\etendu{h}(\xtwo)=0$ and $\etendu{h}(\xhigher)=0$ as $\etendu{h}(\xnabla)=0$. 
Therefore, since $\etendu{\delta}\etendu{h}+\etendu{h}\etendu{\delta}=\id-\etendu{\tau}\etendu{\sigma}$, 
we obtain $\etendu{h}\etendu{\delta}(\xtwo)=\xtwo$ and $\etendu{h}\etendu{\delta}(\xhigher)=\xhigher$. 

It follows that 
\begin{align*}
\xtwo&=\etendu{h}\etendu{\delta}(\xtwo)=\etendu{h}(\lie{\delta}{\xtwo})=\etendu{h}(R^\nabla) \\ 
\xhigher&=\etendu{h}\etendu{\delta}(\xhigher)=\etendu{h}(\lie{\delta}{\xhigher})=\etendu{h} 
\lie{d_L^\nabla+\tfrac{1}{2}\xnabla}{\xnabla}
.\end{align*}
Projecting the second equation onto 
$\sections{L^\vee\otimes S^{k+1}(B^\vee)\otimes B}$, 
we obtain 
\begin{align}
X_2 &= \etendu{h}(R^\nabla) \label{deux} \\ 
X_{k+1} &= \etendu{h}\Big(d_L^\nabla\circ X_k+X_k\circ d_L^\nabla
+\sum_{\substack{p+q=k+1 \\ 2\leqslant p,q\leqslant k-1}} X_p\circ X_q\Big), 
\qquad\text{for}\ k\geqslant 2. \label{troisetplus}
\end{align} 
The successive terms of $\xnabla=\sum_{k=2}^{\infty} X_k$ 
can thus be computed sequentially starting from $X_2=\etendu{h}(R^\nabla)$. 
Therefore, if it exists, the derivation $X^\nabla$ is uniquely determined 
by the torsion-free connection $\nabla$ and the splitting $j:B\to L$.

Now, defining $X_k$ inductively by the relations \eqref{deux} and \eqref{troisetplus} 
and setting $\xnabla=\sum_{k=2}^{\infty} X_k$, 
we have $\etendu{h}(\xnabla)=\etendu{h}(\xtwo+\xhigher)=\etendu{h}^2\big(R^\nabla+\etendu{\delta}(\xhigher)\big)=0$
since $\etendu{h}^2=0$. 
Moreover, we have $X_2=\etendu{h}(R^\nabla)\in\sections{L^\vee\otimes S^2(B^\vee)\otimes B}$ 
as $R^\nabla\in\sections{\Lambda^2 L^\vee\otimes B^\vee\otimes B}$. 
Making use of Equation~\eqref{troisetplus}, one proves by induction on $k$ that 
$X_k\in\sections{L^\vee\otimes S^k(B^\vee)\otimes B}$. 
This completes the proof of the existence of $\xnabla$. 
\end{proof}

We elect to call a dg manifold $(L[1]\oplus B,Q)$ 
constructed in this way a \emph{Fedosov dg manifold}.

We note that Proposition~\ref{strawberry}
was proved independently by Batakidis--Voglaire~\cite{MR3724780}.

\subsection{Equivalence of the two constructions}

The aim of this section is to prove the following theorem, 
which is one of the main result of this paper.

\begin{theorem}\label{thm:Lodz}
Let $(L,A)$ be a Lie pair, 
let $i\circ p+j\circ q=\id_L$ be a splitting of the short exact sequence \eqref{Split},
and let $\nabla$ be an $L$-connection on $B$ extending the Bott $A$-connection.
If $\nabla$ is torsion-free, then the dg manifold 
$(L[1]\oplus B,d_L^{\nabla^\lightning})$ described in Proposition~\ref{pro:PBW} 
coincides with the dg manifold $(L[1]\oplus B,Q)$ constructed 
by the Fedosov iteration described in (the proof of) Proposition~\ref{strawberry}.
\end{theorem}

Consider the bundle map \[ \Theta: L\otimes SB\to SB \] 
defined in~\cite{arXiv:1408.2903} by the relation
\[ \Theta (l;s)=\nabla^\lightning_l s -\nabla_l s-q(l)\odot s, 
\quad\forall l\in\sections{L},s\in\sections{SB} .\]

\begin{lemma}[{\cite[Lemma~5.13]{arXiv:1408.2903}}]
\label{Oregon}
For all $l\in\sections{L}$, we have $\Theta (l;1)=0$.
\end{lemma}

\begin{proposition}[{\cite[Lemma 5.16]{arXiv:1408.2903}}]
\label{Utah}
For all $l\in\sections{L}$, 
the map $s\mapsto\Theta (l;s)$ is a coderivation of 
the $R$-coalgebra $\sections{SB}$ which preserves the filtration 
\[ \cdots \into\Gamma( S^{\leqslant n-1}B) \into
\Gamma( S^{\leqslant n}B) \into \Gamma(S^{\leqslant n+1} B) \into \cdots .\]
\end{proposition}

\begin{lemma}[{\cite[Lemma 5.17]{arXiv:1408.2903}}]
\label{Mozambique}
For all $n\in\NN$ and all $b_0,b_1,\dots,b_n\in\sections{B}$, we have 
\[ \sum_{k=0}^n \Theta\big(j(b_k);b_0\odot\cdots\odot
\widehat{b_k}\odot\cdots\odot b_n\big) =0 .\]
\end{lemma}

\begin{lemma}\label{Idaho} 
Provided the $L$-connection $\nabla$ on $B$ extends the Bott representation, 
the bundle map $\Theta$ associated with the connection $\nabla$ 
and a splitting $i\circ p+j\circ q=\id_L$ satisfies the relation
\[ \Theta(l;b)=-\tfrac{1}{2}\beta^\nabla(q(l),b) \]
for all $l\in\sections{L}$ and $b\in\sections{B}$.
\end{lemma}

In particular, if the $L$-connection $\nabla$ on $B$ is torsion-free, 
we have $\Theta(l;b)=0$ for all $l\in\sections{L}$ and $b\in\sections{B}$.

\begin{proof}
Since $\pbw(b)=j(b)$ for all $b\in\sections{B}$, 
we may rewrite Lemma~\ref{Glaciere} as 
\[ j(Y)\cdot\pbw(Z) = \pbw\big(Y\odot Z + \nabla_{j(Y)} Z 
-\tfrac{1}{2}\ \beta^\nabla(Y,Z)\big) \] 
or 
\[ \nabla^{\lightning}_{j(Y)} Z= Y\odot Z + \nabla_{j(Y)} Z 
-\tfrac{1}{2}\ \beta^\nabla(Y,Z) \]
for all $Y,Z\in\sections{B}$.

On the other hand, according to~\cite[Lemma~5.11]{arXiv:1408.2903}, 
we have \[ \nabla^{\lightning}_{i(X)} Z= \nabla^{\Bott}_X Z \] 
for all $X\in\sections{A}$ and $Z\in\sections{B}$.
Furthermore, since the $L$-connection $\nabla$ on $B$ 
is assumed to extend the Bott representation, 
we have \[ \nabla^{\Bott}_X Z = \nabla_{i(X)} Z \] 
for all $X\in\sections{A}$ and $Z\in\sections{B}$.

Therefore, for all $l\in\sections{L}$ and $b\in\sections{B}$, we have
\begin{multline*} \nabla^\lightning_l b 
= \nabla^\lightning_{i\circ p(l)} b + \nabla^\lightning_{j\circ q(l)} b \\
= \nabla_{i\circ p(l)} b + \big\{ q(l)\odot b + \nabla_{j\circ q(l)} b 
-\tfrac{1}{2}\ \beta^\nabla(q(l),b) \big\} \\
= q(l)\odot b + \nabla_{l} b 
-\tfrac{1}{2}\ \beta^\nabla(q(l),b)
.\end{multline*}
The result then follows from the definition of $\Theta$.
\end{proof}

Let \[ \sections{S B}\otimes_{R}\sections{\hat{S}(B^\vee)}\xto{\duality{-}{-}} R \] 
be the duality pairing defined by 
\[ \duality{b_1\odot\cdots\odot b_p}
{\beta_1\odot\cdots\odot\beta_q} = 
\begin{cases} 
\sum_{\sigma\in S_p} 
\interior{b_1}\beta_{\sigma(1)}\cdot
\interior{b_2}\beta_{\sigma(2)}\cdots 
\interior{b_p}\beta_{\sigma(p)} 
& \text{if } p=q \\ 
0 & \text{if } p\neq q 
\end{cases} \] 
for all $b_1,\dots,b_p\in\sections{B}$ 
and $\beta_1,\dots,\beta_q\in\sections{B^\vee}$. 

A straightforward computation yields the following

\begin{lemma}\label{Bercy}
Let $(\partial_i)_{i\in\{1,\dots,r\}}$ be a local frame of $B$ 
and let $(\chi_j)_{j\in\{1,\dots,r\}}$ be the dual local frame of $B^\vee$. 
We have 
\[ \duality{\partial^I}{\chi^J}=I!\ \delta_{I,J} 
,\quad\forall I,J\in\NO^n \] 
and 
\[ \sigma=\sum_{I\in\NO^n}\frac{1}{I!}
\duality{\partial^I}{\sigma}\, \chi^I 
,\quad\forall \sigma\in\Gamma\big(\hat{S}(B^\vee)\big) .\] 
\end{lemma}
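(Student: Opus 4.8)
The plan is to verify both identities by direct computation in a local frame, reducing everything to the combinatorics of the symmetric group. For the first identity, I would unwind the definition of the duality pairing $\duality{\partial^I}{\chi^J}$ on $\sections{SB}\otimes_R\sections{\hat S B\dual}$. Writing $\partial^I$ as the symmetric product of $\partial_1$ repeated $I_1$ times, $\partial_2$ repeated $I_2$ times, and so on, and similarly for $\chi^J$, the pairing is the sum over all $\sigma\in S_{\abs{I}}$ (when $\abs{I}=\abs{J}$; zero otherwise) of the product $\prod_k \interior{(\partial^I)_k}(\chi^J)_{\sigma(k)}$, where I index the tensor factors of $\partial^I$ and $\chi^J$ in the fixed order. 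Because $\duality{\chi_i}{\partial_j}=\delta_{i,j}$, a term survives only when $\sigma$ maps each block of equal indices on the $\partial$ side bijectively onto the corresponding block on the $\chi$ side; this forces $I=J$, and the number of such block-preserving permutations is exactly $I_1!\cdot I_2!\cdots I_r! = I!$, each contributing $1$. Hence $\duality{\partial^I}{\chi^J}=I!\,\delta_{I,J}$.

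For the second identity, the point is that $\{\chi^I : \abs{I}=p\}$ is an $R$-module basis of $\sections{S^p B\dual}$ (locally), so any $\sigma\in\sections{\hat S B\dual}$ expands uniquely as $\sigma=\sum_I c_I\,\chi^I$ with coefficients $c_I\in R$. Pairing with $\partial^J$ and using the first identity gives $\duality{\partial^J}{\sigma}=\sum_I c_I\,\duality{\partial^J}{\chi^I}=c_J\cdot J!$, so $c_J=\tfrac{1}{J!}\duality{\partial^J}{\sigma}$, which is precisely the claimed formula after relabeling. The completed symmetric algebra is handled degree by degree, and the product over degrees converges in the $\mathfrak m$-adic topology, so no analytic subtlety arises.

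The only mild obstacle is bookkeeping: one must be careful that the definition of the pairing as written (a sum over $S_p$ of products of contractions, with $p$ the degree on the $SB$ side) matches the convention used elsewhere in the paper, and that the symmetrization is not double-counted when passing from the ordered tensor-factor description of $\partial^I$, $\chi^I$ to the symmetric products. Once the indexing convention is pinned down, both statements are immediate, which is why the paper simply asserts that ``a straightforward computation yields'' the lemma.
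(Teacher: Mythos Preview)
Your proposal is correct and is precisely the ``straightforward computation'' the paper alludes to without writing out; the paper gives no proof beyond that phrase, and your argument---counting block-preserving permutations for the first identity and extracting coefficients against the local basis $\{\chi^I\}$ for the second---is exactly what is intended.
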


\begin{lemma}\label{Goncourt}
For any $l\in \sections{L}$,
and all $s\in\sections{S B }$, 
and $\sigma\in\Gamma\big(\hat{S} B^\vee \big)$, we have
\[ \duality{s}{\interior{l}\delta(\sigma)}
=\duality{q(l)\odot s}{\sigma} .\] 
\end{lemma}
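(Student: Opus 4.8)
The plan is to reduce both sides of the claimed identity to explicit multi-index expressions by means of Lemma~\ref{Bercy} and a local frame, and then match coefficients. First I would fix a local frame $(\partial_i)_{i=1}^r$ of $B=L/A$ with dual frame $(\chi_j)_{j=1}^r$ of $B\dual$, so that $\duality{\partial^I}{\chi^J}=I!\,\delta_{I,J}$ by Lemma~\ref{Bercy}. By $R$-bilinearity of the pairing and $R$-linearity of $\delta$, $\interior{l}$, and the map $s\mapsto q(l)\odot s$ in the argument $l$, it suffices to verify the identity when $l$ ranges over the frame of $L$ — but in fact only the $B$-component $q(l)$ of $l$ enters either side, so I may as well test against $l$ with $q(l)=\partial_m$ for each $m$. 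Likewise it suffices to take $s=\partial^I$ and $\sigma=\chi^J$ for arbitrary multi-indices $I,J$, since these span (topologically, for $\sigma$) the relevant spaces and both sides are continuous for the filtration.

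With these reductions in place the computation is a direct unwinding of the definition of $\delta$. Recall $\delta(\omega\otimes\chi^J)=\sum_{n=1}^r \big(q\transpose(\chi_n)\wedge\omega\big)\otimes J_n\,\chi^{J-e_n}$; contracting with $l$ (where $q(l)=\partial_m$) kills the wedge factor $q\transpose(\chi_n)$ against $l$ via $\duality{q\transpose(\chi_n)}{l}=\duality{\chi_n}{q(l)}=\delta_{n,m}$, leaving $\interior{l}\delta(\chi^J)=J_m\,\chi^{J-e_m}$ on the nose (taking $\omega=1$). Hence the left-hand side becomes $\duality{\partial^I}{J_m\,\chi^{J-e_m}}=J_m\,(J-e_m)!\,\delta_{I,J-e_m}$. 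For the right-hand side, $q(l)\odot\partial^I=\partial_m\odot\partial^I=\partial^{I+e_m}$, so it equals $\duality{\partial^{I+e_m}}{\chi^J}=(I+e_m)!\,\delta_{I+e_m,J}$. The two Kronecker deltas agree ($I=J-e_m \iff I+e_m=J$, which forces $J_m\geqslant 1$), and on their common support $J_m\,(J-e_m)! = J_m\cdot \tfrac{J!}{J_m} = J!= (I+e_m)!$; in the degenerate case $J_m=0$ both sides vanish. This establishes the identity on frame elements, and the reductions above promote it to the general statement.

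I do not expect any genuine obstacle here: the only points requiring a word of care are the continuity/density argument that lets one restrict to $\sigma=\chi^J$ (handled by the completeness of the filtration on $\sections{\hat S B\dual}$ together with the fact that $\interior{l}\delta$ lowers symmetric degree and is $R$-linear, so it is determined by its values on the $\chi^J$), and the bookkeeping of the factorials $J_m\,(J-e_m)!=(I+e_m)!$, which is the elementary identity $J_m\cdot(J!/J_m)=J!$ valid precisely when $J_m\geqslant 1$. Everything else is a routine substitution into the definitions of $\delta$ and of the duality pairing.
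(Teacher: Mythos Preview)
Your proof is correct and follows essentially the same approach as the paper: reduce to $s=\partial^I$, $\sigma=\chi^J$, unwind the definition of $\delta$, and use the factorial identity $J_k\,(J-e_k)!\,\delta_{I,J-e_k}=J!\,\delta_{I+e_k,J}$ together with Lemma~\ref{Bercy}. The only cosmetic difference is that you further specialize to $q(l)=\partial_m$, whereas the paper keeps $l$ arbitrary and carries the coefficients $\interior{q(l)}\chi_k$ through the sum over $k$.
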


\begin{proof}
It suffices to prove the relation for $s=\partial^I$ and $\sigma=\chi^J$. 
We have 
\begin{multline*}
\duality{\partial^I}{\interior{l}\delta(\chi^J)} 
= \duality{\partial^I}{\sum_{k=1}^r \interior{q(l)}\chi_k \cdot J_k \chi^{J-e_k}} 
= \sum_{k=1}^r \interior{q(l)}\chi_k \ \duality{\partial^I}{J_k \chi^{J-e_k}} \\ 
= \sum_{k=1}^r \interior{q(l)}\chi_k \ J_k\ I!\ \delta_{I,J-e_k} 
= \sum_{k=1}^r \interior{q(l)}\chi_k \ J!\ \delta_{I+e_k,J} 
= \sum_{k=1}^r \interior{q(l)}\chi_k \ \duality{\partial^{I+e_k}}{\chi^J} \\ 
= \duality{\sum_{k=1}^r \interior{q(l)}\chi_k \cdot \partial_k\odot\partial^I}{\chi^J} 
= \duality{q(l)\odot\partial^I}{\chi^J}
.\qedhere\end{multline*}
\end{proof}

Consider the map 
\[ \Xi : \sections{S^k B^\vee } \to 
\sections{L^\vee\otimes\hat{S}^{\geqslant k+1}(B^\vee)}, 
\quad \forall k\geqslant 0, \]
defined by 
\begin{equation}\label{Rambuteau} 
\duality{s}{\interior{l}\Xi (\sigma)} 
= \duality{\Theta (l; s)}{\sigma} 
,\end{equation}
for all $l\in\sections{L}$, $s\in\sections{S B}$, 
and $\sigma\in\sections{\hat{S}(B^\vee)}$. 

The $L$-connections $\nabla$ and $\nabla^\lightning$ 
defined on $S(B)$ induce $L$-connections 
on the dual bundle $\hat{S}(B^\vee)$ --- see Remark~\ref{Amalfi}.

\begin{proposition}\label{Budapest}
$d_L^{\nabla^\lightning}=-\delta+d_L^{\nabla}-\Xi$
\end{proposition}

\begin{proof} 
According to Remark~\ref{Amalfi}, we have 
\[ \duality{\nabla^\lightning_l s}{\sigma}+
\duality{s}{\nabla^\lightning_l \sigma} = 
\rho(l) \duality{s}{\sigma} =
\duality{\nabla_l s}{\sigma}+
\duality{s}{\nabla_l \sigma} ,\]
for all $l\in\sections{L}$, $s\in\sections{SB}$, and $\sigma\in\sections{\hat{S}(B^\vee)}$.
From there, we obtain 
\begin{align*}
\duality{\nabla^\lightning_l s-\nabla_l s}{\sigma} &=
\duality{s}{\nabla_l \sigma-\nabla^\lightning_l \sigma} \\ 
\duality{q(l)\odot s+\Theta(l; s)}{\sigma} &= 
\duality{s}{\interior{l}\big(d_L^\nabla\sigma-d_L^{\nabla^\lightning}\sigma\big)} \\ 
\intertext{and, making use of Lemma~\ref{Goncourt} 
and Equation~\eqref{Rambuteau},} 
\duality{s}{\interior{l}\delta(\sigma)+\interior{l}\Xi(\sigma)\big)} &= 
\duality{s}{\interior{l}\big(d_L^\nabla\sigma-d_L^{\nabla^\lightning}\sigma\big)} 
\end{align*}
or, equivalently, \[ d_L^{\nabla^\lightning}=-\delta+d_L^\nabla-\Xi .\qedhere\] 
\end{proof}

\begin{proposition}
For every $l\in\sections{L}$, 
the operator $\interior{l}\Xi$ is a derivation of the algebra 
the $R$-algebra $\Gamma\big(\hat{S}(B^\vee)\big)$ which preserves the filtration 
\[ \cdots \into \Gamma\big(\hat{S}^{\geqslant n+1}(B^\vee)\big) 
\into \Gamma\big(\hat{S}^{\geqslant n}(B^\vee)\big) 
\into \Gamma\big(\hat{S}^{\geqslant n-1}(B^\vee)\big) \into \cdots .\]
\end{proposition}

\begin{proof}
The result follows immediately from Proposition~\ref{Utah} 
since the algebra $\Gamma\big(\hat{S}(B^\vee)\big)$ is dual to the coalgebra 
$\Gamma(S(B))$ and $\interior{l}\Xi$ 
is the transpose of 
$\Theta(l;\argument)$ according to Equation~\eqref{Rambuteau}. 
\end{proof}

Therefore, $\Xi$ may be regarded as an element of the subspace
$\Gamma\big(L^\vee\otimes\hat{S}(B^\vee)\otimes B\big)$ 
of the space of derivations of the algebra 
$\Gamma\big(\Lambda^\bullet L^\vee\otimes\hat{S}(B^\vee)\big)$. 

Given any pair of dual local frames 
$(\partial_i)_{i\in\{1,\dots,r\}}$ 
and $(\chi_j)_{j\in\{1,\dots,r\}}$ 
for the vector bundles $B$ and $B^\vee$ 
and any pair of dual local frames 
$(l_m)_{m=1}^{\rk(L)}$ 
and $(\lambda_m)_{m=1}^{\rk(L)}$ 
for the vector bundles $L$ and $L^\vee$, 
we have 
\[ \Xi=\sum_{m=1}^{\rk(L)}\sum_{k=1}^r 
\lambda_m\otimes\interior{l_m}\Xi(\chi_k)\otimes\partial_k ,\]
since each $\interior{l_m}\Xi$ 
is a derivation of the $R$-algebra
$\Gamma\big(\hat{S}(B^\vee)\big)$, 
which is generated locally by $\chi_1,\dots,\chi_r$.
Furthermore, for all $l\in\sections{L}$, we have 
\begin{align*} 
\interior{l}\Xi (\chi_k) =& \sum_{I\in\NO^r} \frac{1}{I!} 
\duality{\partial^I}{\interior{l}\Xi (\chi_k)} \chi^I
&&\text{by Lemma~\ref{Bercy},} \\ 
=& \sum_{I\in\NO^r} \frac{1}{I!} 
\duality{\Theta(l;\partial^I)}{\chi_k} \chi^I
&&\text{by Equation~\eqref{Rambuteau}.} 
\end{align*}

\begin{lemma}\label{Oslo}
\strut
\begin{enumerate}
\item If the $L$-connection $\nabla$ on $B$ is torsion-free, then 
$\Xi\in\Gamma\big(L^\vee\otimes\hat{S}^{\geqslant 2}(B^\vee)\otimes B\big)$.
\item Otherwise,
$\Xi\in\Gamma\big(L^\vee\otimes\hat{S}^{\geqslant 1}(B^\vee)\otimes B\big)$.
\end{enumerate}
\end{lemma}

\begin{proof} 
If the connection $\nabla$ is torsion-free, 
then $\interior{l}\Xi\in\Gamma\big(\hat{S}^{\geqslant 2}(B^\vee)\otimes B\big)$
as $\Theta(l;\partial^I)=0$ for $\abs{I}\leqslant 1$ 
according to Lemma~\ref{Oregon} and Lemma~\ref{Idaho}. 
However, if the torsion of $\nabla$ is not zero, we can only say that 
$\interior{l}\Xi\in\Gamma\big(\hat{S}^{\geqslant 1}(B^\vee)\otimes B\big)$
as $\Theta(l;\partial^I)$ need not vanish for $\abs{I}=1$.
\end{proof}

We note that, for every pair of dual local frames $(\partial_i)_{i\in\{1,\dots,r\}}$ and 
$(\chi_j)_{j\in\{1,\dots,r\}}$ for $B$ and $B^\vee$, 
we have 
\[ \Xi=\sum_{k=1}^r \sum_{J\in\NO^r} 
\frac{1}{J!} \duality{\partial^J}{\Xi (\chi_k)} 
\chi^J \partial_k .\] 

\begin{lemma}\label{Ragusa}
For all $\lambda\in\sections{L^\vee}$ and $J\in\NO^r$, we have \[ h(\lambda\otimes\chi^J) 
= \frac{1}{1+\abs{J}}\sum_{k=1}^r 
\interior{j(\partial_k)}\lambda\otimes\chi^{J+e_k} ,\] 
where $(\partial_i)_{i\in\{1,\dots,r\}}$ is any local frame of $B$ 
and $(\chi_j)_{j\in\{1,\dots,r\}}$ is the dual local frame of $B^\vee$.
\end{lemma}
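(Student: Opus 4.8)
The plan is to reduce the identity to the piecewise definition of $h$ given in Section~\ref{section:pek} by decomposing the arbitrary $1$-form $\lambda\in\sections{L\dual}$ along the chosen splitting $i\circ p+j\circ q=\id_L$. Dualizing this splitting yields $p\transpose\circ i\transpose+q\transpose\circ j\transpose=\id_{L\dual}$, so that
\[ \lambda=p\transpose(i\transpose\lambda)+q\transpose(j\transpose\lambda) ,\]
where $p\transpose(i\transpose\lambda)\in\sections{p\transpose(\Lambda^1 A\dual)}$ has bidegree $(1,0)$ and $q\transpose(j\transpose\lambda)\in\sections{q\transpose(\Lambda^1 B\dual)}$ has bidegree $(0,1)$. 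Since $h$ is linear and acts separately on each bidegree component of $\sections{\Lambda^\bullet L\dual\otimes\hat{S} B\dual}$, it suffices to evaluate $h$ on $p\transpose(i\transpose\lambda)\otimes\chi^J$ and on $q\transpose(j\transpose\lambda)\otimes\chi^J$ and add the results.

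For the first piece the parameter $v$ in the definition of $h$ equals $0$, hence $h\big(p\transpose(i\transpose\lambda)\otimes\chi^J\big)=0$. For the second piece $v=1$, so the definition of $h$ gives
\[ h\big(q\transpose(j\transpose\lambda)\otimes\chi^J\big)=\frac{1}{1+\abs{J}}\sum_{k=1}^{r}\big(\interior{j(\partial_k)}q\transpose(j\transpose\lambda)\big)\otimes\chi^{J+e_k} .\]
I would then rewrite the interior products using the defining relations of the splitting: from $q\circ j=\id_B$ one obtains $\interior{j(\partial_k)}q\transpose(j\transpose\lambda)=\interior{q(j(\partial_k))}(j\transpose\lambda)=\interior{\partial_k}(j\transpose\lambda)=\interior{j(\partial_k)}\lambda$, and from $p\circ j=0$ one obtains $\interior{j(\partial_k)}p\transpose(i\transpose\lambda)=\interior{p(j(\partial_k))}(i\transpose\lambda)=0$, which is of course consistent with the vanishing found for the first piece. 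Summing the two contributions gives
\[ h(\lambda\otimes\chi^J)=\frac{1}{1+\abs{J}}\sum_{k=1}^{r}\interior{j(\partial_k)}\lambda\otimes\chi^{J+e_k} ,\]
which is the claimed formula; independence of the right-hand side from the chosen frame $(\partial_i)$ of $B$ is automatic.

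I do not expect any genuine obstacle: this is a bookkeeping lemma, and the only step requiring care is to track the bidegree $(u,v)$ so as to select the correct branch of the piecewise definition of $h$ --- in particular, to remember that $h$ annihilates everything of bidegree $(\bullet,0)$, so that only the ``vertical'' component $q\transpose(j\transpose\lambda)$ of $\lambda$ contributes. This explicit description of $h$ on $1$-forms will be used in the sequel, in the comparison of the Fedosov data with the data produced by $\pbw^{\nabla,j}$.
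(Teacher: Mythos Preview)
Your proof is correct and follows essentially the same approach as the paper: decompose $\lambda$ along the splitting into its $p\transpose(A\dual)$ and $q\transpose(B\dual)$ components, observe that both sides vanish on the first component (the left side by the $v=0$ branch of the definition of $h$, the right side because $p\circ j=0$), and read off the formula for the second component directly from the $v=1$ branch. The only difference is that you spell out the interior-product identities $\interior{j(\partial_k)}q\transpose(j\transpose\lambda)=\interior{j(\partial_k)}\lambda$ and $\interior{j(\partial_k)}p\transpose(i\transpose\lambda)=0$ more explicitly than the paper does.
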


\begin{proof}
For $\lambda\in\sections{q\transpose B^\vee}$, 
the result follows immediately from Equation~\eqref{Skopje}, 
the very definition of $h$. 
The results holds for $\lambda\in\sections{p\transpose A^\vee}$ as well since, for all $\alpha\in\sections{A^\vee}$, 
we have $h(p\transpose(\alpha)\otimes\chi^J) =0$ by the very definition of $h$ 
and $\interior{j(\partial_k)}p\transpose(\alpha)=0$ as $p\circ j=0$. 
\end{proof}

\begin{proposition}\label{Telegraphe}
$\etendu{h}(\Xi)=0$
\end{proposition}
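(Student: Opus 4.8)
The plan is to compute $\etendu{h}(\Xi^\nabla)$ directly from the local expression for $\Xi^\nabla$ recorded just above and to recognize the outcome as a rescaling of the left-hand side of the identity in Proposition~\ref{Mozambique}, which forces it to vanish. First I would fix dual local frames $(l_m),(\lambda_m)$ for $L,L\dual$ and $(\partial_i),(\chi_i)$ for $B,B\dual$ and write
\[ \Xi^\nabla=\sum_{m,k}\sum_{J\in\NO^r}\frac{1}{J!}\,\duality{\Theta^\nabla(l_m;\partial^J)}{\chi_k}\;\lambda_m\otimes\chi^J\otimes\partial_k, \]
viewing $\Xi^\nabla$ as an element of $\sections{L\dual\otimes\hat{S}B\dual\otimes B}$. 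Applying $\etendu{h}$, which acts by $h$ on the $\Lambda^\bullet L\dual\otimes\hat{S}B\dual$ slot and by the identity on $B$, and evaluating each $h(\lambda_m\otimes\chi^J)$ by Lemma~\ref{Ragusa}, introduces the contraction $\interior{j(\partial_l)}\lambda_m$ against each frame covector.

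The first key step is then to carry out the sum over $m$ before anything else. Since $\sum_m\big(\interior{j(\partial_l)}\lambda_m\big)\,l_m=j(\partial_l)$ and $\Theta^\nabla$ is $R$-linear in its $L$-slot, the partial sum $\sum_m\duality{\Theta^\nabla(l_m;\partial^J)}{\chi_k}\,\interior{j(\partial_l)}\lambda_m$ collapses to $\duality{\Theta^\nabla(j(\partial_l);\partial^J)}{\chi_k}$, which leaves
\[ \etendu{h}(\Xi^\nabla)=\sum_{k,l}\sum_{J\in\NO^r}\frac{1}{J!\,(1+\abs{J})}\,\duality{\Theta^\nabla\big(j(\partial_l);\partial^J\big)}{\chi_k}\;\chi^{J+e_l}\otimes\partial_k. \]

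Next I would inspect the coefficient of a fixed $\chi^K\otimes\partial_k$: after the change of summation variable $J\mapsto K=J+e_l$ (so a given $K$ receives contributions only from the $l$ with $K_l\geqslant 1$, each with $J=K-e_l$ and $\abs{J}=\abs{K}-1$) and the elementary identity $(K-e_l)!=K!/K_l$, that coefficient becomes $\tfrac{1}{\abs{K}\,K!}\,\duality{\sum_{l:\,K_l\geqslant 1}K_l\,\Theta^\nabla(j(\partial_l);\partial^{K-e_l})}{\chi_k}$. Writing $\partial^K=b_1\odot\cdots\odot b_m$ with $m=\abs{K}$ as a symmetric product of frame sections in which $\partial_l$ occurs exactly $K_l$ times, one gets $\sum_{l:\,K_l\geqslant 1}K_l\,\Theta^\nabla(j(\partial_l);\partial^{K-e_l})=\sum_{i=1}^{m}\Theta^\nabla\big(j(b_i);b_1\odot\cdots\odot\widehat{b_i}\odot\cdots\odot b_m\big)$, which vanishes by Proposition~\ref{Mozambique} when $m\geqslant 2$, reduces to Equation~\eqref{Oregon} when $m=1$, and does not occur for $m=0$. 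Hence $\etendu{h}(\Xi^\nabla)=0$.

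I expect the only genuinely delicate point to be the interaction of $\etendu{h}$ with the $L\dual$-slot of $\Xi^\nabla$: one must observe that the contraction $\interior{j(\partial_l)}\lambda_m$ supplied by Lemma~\ref{Ragusa} is precisely what converts the frame vector $l_m$ into $j(\partial_l)$, so that after summing over $m$ the expression lands exactly in the hypothesis of Proposition~\ref{Mozambique}. Everything downstream — the factorial identity $(K-e_l)!=K!/K_l$, the change of summation variable, and rewriting the sum over $l$ as a sum over the factors of $\partial^K$ — is routine bookkeeping.
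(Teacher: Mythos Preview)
Your proposal is correct and follows essentially the same route as the paper's proof: apply Lemma~\ref{Ragusa} to the local expression for $\Xi^\nabla$, pass from $\Xi^\nabla$ back to $\Theta^\nabla$ via the defining duality, reindex by $K=J+e_l$ using $(K-e_l)!=K!/K_l$, and recognize the resulting coefficient as the sum in Proposition~\ref{Mozambique}. The only cosmetic difference is that the paper starts from the expression $\Xi^\nabla=\sum_{k,J}\tfrac{1}{J!}\duality{\partial^J}{\Xi^\nabla(\chi_k)}\chi^J\partial_k$ (with the $L\dual$-slot already packaged inside the pairing) rather than the expanded form in the $(l_m,\lambda_m)$ frame, so it never needs your ``sum over $m$'' step; your handling of the edge cases $\abs{K}\leqslant 1$ via Equation~\eqref{Oregon} is in fact slightly more explicit than the paper's.
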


\begin{proof}
Let $(\partial_i)_{i\in\{1,\dots,r\}}$ be a local frame of $B$ 
and let $(\chi_j)_{j\in\{1,\dots,r\}}$ be the dual local frame of $B^\vee$.

From \[ \Xi=\sum_{k=1}^r \sum_{J\in\NO^r} \frac{1}{J!} 
\duality{\partial^J}{\Xi(\chi_k)} \chi^J \partial_k ,\] 
we obtain, using Lemma~\ref{Ragusa}, 
\begin{align*}
\etendu{h}(\Xi) =& 
\sum_{k=1}^r \sum_{J\in\NO^r} 
h\bigg\{ \frac{1}{J!}\duality{\partial^J}{\Xi(\chi_k)} \chi^J\bigg\} \partial_k \\
=& \sum_{k=1}^r \sum_{J\in\NO^r} 
\frac{1}{1+\abs{J}} \sum_{p=1}^r \frac{1}{J!} 
\duality{\partial^J}{\interior{j(\partial_p)}\Xi(\chi_k)} \chi_p \chi^J \partial_k \\ 
=& \sum_{k=1}^r \sum_{J\in\NO^r} 
\frac{1}{1+\abs{J}} \sum_{p=1}^r \frac{1}{J!} 
\duality{\Theta(j(\partial_p); \partial^J)}{\chi_k} 
\chi^{J+e_p} \partial_k \\ 
=& \sum_{k=1}^r \sum_{M\in\NO^r} 
\frac{1}{\abs{M}} \frac{1}{M!} 
\duality{ \sum_{p=1}^r M_p\ \Theta (j(\partial_p); \partial^{M-e_p})}{\chi_k} 
\chi^M \partial_k 
.\end{align*}
It follows directly from Lemma~\ref{Mozambique} that 
\[ \sum_{p=1}^r M_p \ \Theta (j(\partial_p); \partial^{M-e_p}) =0 \]
for every $M=(M_1,\dots,M_r)\in\NO^r$. 
\end{proof}

We are now ready to complete the proof of Theorem~\ref{thm:Lodz}.

\begin{proof}[Proof of Theorem~\ref{thm:Lodz}] 
Since $\Xi\in\Gamma\big(L^\vee\otimes
\hat{S}^{\geqslant 2}(B^\vee)\otimes B\big)$ 
provided $\beta^\nabla=0$ (Proposition~\ref{Oslo}), 
$\etendu{h}(\Xi)=0$ (Proposition~\ref{Telegraphe}), 
and $d_L^{\nabla^\lightning}=-\delta+d_L^{\nabla}-\Xi$ 
(Proposition~\ref{Budapest}) satisfies 
$d_L^{\nabla^\lightning}\circ d_L^{\nabla^\lightning}=0$, 
the uniqueness statement in Proposition~\ref{strawberry} 
asserts that $X^\nabla=-\Xi$ and $Q=d_L^{\nabla^\lightning}$. 
\end{proof}

\section{Dolgushev--Fedosov quasi-isomorphisms}

\subsection{Contraction of the Fedosov dg manifold}

Our second main result, Theorem~\ref{thm:main2} below, 
extends the Dolgushev--Fedosov quasi-isomorphism \cite[Theorem 3]{MR2102846} 
to the context of Lie pairs. This section is devoted to its proof;
Theorem~\ref{thm:main2} is an immediate consequence 
of Theorem~\ref{thm:Lodz} and Proposition~\ref{Ottignies} below.
 
\begin{theorem} 
\label{thm:main2}
Given a Lie pair $(L,A)$, let $d^{\nabla^\lightning}$ 
be the homological vector field on $L[1]\oplus B$ 
determined by the choice of 
a splitting $i\circ p+j\circ q=\id_L$ 
of the short exact sequence \eqref{Split}
and an $L$-connection $\nabla$ on $B$
as in Proposition~\ref{pro:PBW}.
Then the natural inclusion $(A[1],d_A)\into(L[1]\oplus B,Q)$
is a quasi-isomorphism of dg manifolds.
\end{theorem}

\begin{remark}
In particular, if $L$ is the tangent bundle to a smooth manifold $M$ 
and $A$ is its rank-zero subbundle, 
Theorem~\ref{thm:main2} reduces to the part of \cite[Theorem~3]{MR2102846} 
pertaining to functions on the manifold $M$.
\end{remark}

Dolgushev established the quasi-isomorphism 
\cite[Theorem 3]{MR2102846} by a direct verification.
Here we will prove the stronger result that the cochain complexes
$\big(\sections{\Lambda^\bullet L^\vee\otimes\hat{S}(B^\vee)}, 
d_L^{\nabla^\lightning}\big)$
and $\big(\sections{\Lambda^\bullet A^\vee },d_A\big)$ 
are indeed homotopy equivalent. 
Homological perturbation --- see Appendix~\ref{Vilnius} --- 
provides a quick and easy proof of this result: 
the operator $\varrho=d_L^{\nabla^\lightning}+\delta$
can be understood as a perturbation of the cochain complex 
$\big(\sections{\Lambda^\bullet L^\vee\otimes\hat{S}(B^\vee)},-\delta\big)$
appearing in the contraction of Proposition~\ref{Geneva}.

\begin{lemma}\label{Zurich}
The operator $\varrho=d_L^{\nabla^\lightning}+\delta$ 
is a perturbation of the cochain complex 
\[ \begin{tikzcd}[column sep=scriptsize] 
\cdots \arrow{r}{} & \sections{\Lambda^{k-1} L^\vee 
\otimes\hat{S}(B^\vee)} \arrow{r}{-\delta} &
\sections{\Lambda^{k} L^\vee\otimes\hat{S}(B^\vee)} \arrow{r}{-\delta} & 
\sections{\Lambda^{k+1} L^\vee\otimes\hat{S}(B^\vee)} \arrow{r}{} & \cdots 
\end{tikzcd} \] 
raising the descending filtration 
$\cdots\subseteq\mathscr{F}_{m+1}\subseteq\mathscr{F}_m\subseteq\mathscr{F}_{m-1}\subseteq\cdots$
defined by
\[ \mathscr{F}_m=\prod_{k+p\geqslant m} \Gamma\big(\Lambda^k L^\vee\otimes S^p B^\vee\big) .\]
\end{lemma}

\begin{proof}
According to Proposition~\ref{Budapest}, 
we have $\varrho=d_L^{\nabla^\lightning}+\delta=d_L^{\nabla}-\Xi$.
Furthermore, according to Lemma~\ref{Oslo} and 
the very definitions of $\delta$, $d^\nabla_L$, and $\Xi$, we have
\[ \begin{tikzcd}[column sep=huge,row sep=small]
& \sections{\Lambda^{k+1}(L^\vee)\otimes S^{p-1}(B^\vee)} \\
\sections{\Lambda^k(L^\vee)\otimes S^p(B^\vee)}
\arrow[bend left]{ru}{-\delta}
\arrow{r}{d_L^{\nabla}}
\arrow[bend right]{rd}[swap]{-\Xi}
& \sections{\Lambda^{k+1}(L^\vee)\otimes S^p(B^\vee)} \\
& \sections{\Lambda^{k+1}(L^\vee)\otimes\hat{S}^{\geqslant p}(B^\vee)}
\end{tikzcd} .\]
Therefore, while the differential $-\delta$ respects the filtration $\mathscr{F}$, 
the operator $\varrho=d_L^{\nabla}-\Xi$ raises it: 
$\varrho(\mathscr{F}_m)\subseteq\mathscr{F}_{m+1}$.
Finally, we have $(-\delta+\varrho)^2=(d_L^{\nabla^\lightning})^2=0$
since the connection $\nabla^\lightning$ is flat.
This concludes the proof.
\end{proof}

\begin{proposition}\label{Ottignies}
Given a Lie pair $(L,A)$, let $d_A$ denote the Chevalley--Eilenberg differential 
of the Lie algebroid $A$ regarded as a homological vector field on $A[1]$ and 
let $d^{\nabla^\lightning}$ be the homological vector field on $L[1]\oplus B$ 
determined by the choice of a splitting $i\circ p+j\circ q=\id_L$ 
of the short exact sequence \eqref{Split} and an $L$-connection $\nabla$ on $B$
as in Proposition~\ref{strawberry}.
Then, there exists a contraction
\[ \begin{tikzcd}
\big(\sections{\Lambda^\bullet A^\vee},d_{A}\big)
\arrow[r, "\perturbed{\tau}", shift left] &
\big(\sections{\Lambda^\bullet L^\vee\otimes\hat{S}(B^\vee)},d_{L}^{\nabla^\lightning}\big) 
\arrow[l, "\sigma", shift left]
\arrow[loop, "\perturbed{h}", out=5, in=-5, looseness = 3]
\end{tikzcd} .\]
where 
\[ \perturbed{\tau}=\sum_{k=0}^\infty(h\varrho)^k\tau, 
\qquad \perturbed{h}=\sum_{k=0}^\infty(h\varrho)^k h, 
\qquad \varrho=d_L^{\nabla^\lightning}+\delta ,\] 
and the maps $\delta$, $\tau$, $\sigma$, and $h$ are 
those defined in Section~\ref{section:pek}. 
In particular, $\sigma$ is the natural inclusion of graded manifolds 
$A[1]\into L[1]\oplus B$ defined by Equation~\eqref{Nuuk}.
\end{proposition}

\begin{proof}
We proceed by homological perturbation (see Lemma~\ref{Riga}). 
Starting from the filtered contraction of Proposition~\ref{Geneva}, 
it suffices to perturb the coboundary operator $-\delta$ by the operator $\varrho$ (see Lemma~\ref{Zurich})
to obtain the new contraction 
\[ \begin{tikzcd}
\big(\sections{\Lambda^\bullet A^\vee},\vartheta\big)
\arrow[r, "\perturbed{\tau}", shift left] &
\big(\sections{\Lambda^\bullet L^\vee\otimes\hat{S}(B^\vee)},-\delta+\varrho\big) 
\arrow[l, "\perturbed{\sigma}", shift left]
\arrow[loop, "\perturbed{h}", out=5, in=-5, looseness = 3]
\end{tikzcd} .\]
We have $\sigma\varrho h=0$ as, for all $n,p\in\NO$, 
\[ \begin{tikzcd}[column sep=small] 
\sections{\Lambda^n L^\vee \otimes S^p B^\vee} 
\arrow{r}{h} & 
\sections{\Lambda^{n-1} L^\vee \otimes S^{p+1} B^\vee} \arrow{r}{\varrho} & 
\sections{\Lambda^n L^\vee \otimes\hat{S}^{\geqslant p+1}(B^\vee)} \arrow{r}{\sigma} & 0.
\end{tikzcd} \] 
Therefore, we obtain 
\[ \perturbed{\sigma}:=\sum_{k=0}^\infty\sigma(\varrho h)^k=\sigma \]
and 
\[ \vartheta:=\sum_{k=0}^\infty\sigma(\varrho h)^k\varrho\tau=\sigma\varrho\tau
=\sigma(d_L^\nabla-\Xi^\nabla)(p\transpose\otimes 1) =\sigma\big((d_L\circ p\transpose)\otimes 1\big)=d_A .\]
The result follows immediately since 
$-\delta+\varrho=d_L^{\nabla^\lightning}$ (Proposition~\ref{Budapest}). 
\end{proof}

We note that a similar construction of a Fedosov resolution of the algebra 
of smooth functions on a manifold based on homological perturbation 
was described by Hans-Christian Herbig in~\cite{arXiv:0708.3598}.
 
\subsection{Matched pairs}

In this section, we establish an explicit expression for the quasi-isomorphism 
\[ \perturbed{\tau}:\sections{\Lambda^{\bullet}A^\vee} \to \sections{\Lambda^{\bullet}L^\vee\otimes\hat{S}(B^\vee)} \] 
defined in Proposition~\ref{Ottignies} valid only in the special case of matched pairs. 
This formula can be considered as an extension to the matched pair case of the augmentation map 
\[ \perturbed{\tau}:C^\infty(M)\to\Omega^0\big(M;\hat{S}(T_M^{\vee})\big) \]
arising from the Emmrich--Weinstein `formal exponential map' --- see Section~\ref{EW} and~\cite[Theorem~1.6]{MR1327535}.

Suppose the short exact sequence \eqref{Split} admits a splitting $j:B\to L$ 
whose image $j(B)$ is a Lie subalgebroid of $L$ --- i.e.\ $L=A\bowtie B$ is a matched pair. 
Then $B$ is a Lie algebroid and composition of the morphism of associative algebras 
$\enveloping{B}\to\enveloping{L}$ induced by $j$ with the canonical projection 
$\enveloping{L}\onto\frac{\enveloping{L}}{\enveloping{L}\sections{A}}$ 
yields a canonical isomorphism of left $R$-coalgebras $\enveloping{B}\cong\frac{\enveloping{L}}{\enveloping{L}\sections{A}}$. 

Since $L=A\bowtie B$ is a matched pair, we have a Bott $B$-representation 
on $A$: \[ \nabla^{\Bott}_b a = p\big(\lie{j(b)}{i(a)}\big) ,\quad\forall 
b\in\sections{B},a\in\sections{A} .\] 
The dual $B$-connection on $A^\vee$ extends to the exterior algebra $\Lambda A^\vee$ by derivation: 
\[ \nabla^{\Bott}_b \alpha = i\transpose\big( \liederivative{j(b)} (p\transpose\alpha) \big) 
,\quad\forall b\in\sections{B},\alpha\in\sections{\Lambda A^\vee} .\] 
One can show that, for every $b\in\sections{B}$, the diagram 
\[ \begin{tikzcd}
\sections{\Lambda^\bullet A^\vee} \arrow{r}{\nabla^{\Bott}_{b}} \arrow{d}{p\transpose} & \sections{\Lambda^\bullet A^\vee} \arrow{d}{p\transpose} \\ 
\sections{\Lambda^\bullet L^\vee} \arrow{r}[swap]{\liederivative{j(b)}} & \sections{\Lambda^\bullet L^\vee}
\end{tikzcd} \]
commutes. 
Since the Bott $B$-connection on $\Lambda A^\vee$ is flat, $\sections{\Lambda A^\vee}$ is a left $\enveloping{B}$-module: the action 
\[ \enveloping{B}\times\sections{\Lambda A^\vee}\xto{\rtimes}\sections{\Lambda A^\vee} \] satisfies 
\[ p\transpose\big( b_1 b_2\cdots b_n \rtimes\alpha \big) = \liederivative{j(b_1)} \liederivative{j(b_2)} 
\cdots \liederivative{j(b_n)}(p\transpose \alpha) ,\] 
for all $b_1,b_2,\dots,b_n\in\sections{B}$ and $\alpha\in\sections{\Lambda A^\vee}$. 

In the matched pair case, the chain map 
$\perturbed{\tau}:\sections{\Lambda^{\bullet}A^\vee} \to \sections{\Lambda^{\bullet}L^\vee\otimes\hat{S}(B^\vee)}$ 
defined in Proposition~\ref{Ottignies} 
admits a simple description in terms of the splitting $j:B\into L$, 
the associated left $\enveloping{B}$-module structure $\rtimes$ on $\sections{\Lambda A^\vee}$, 
and the map $\pbw:\sections{S(B)}\to\enveloping{B}$ induced by $j$ and $\nabla$.

Consider the derivation $\mathscr{D}$ of the subalgebra
$\sections{p\transpose(\Lambda A^\vee)\otimes\hat{S}(B^\vee)}$ 
of $\sections{\Lambda L^\vee\otimes\hat{S}(B^\vee)}$ defined by 
\[ \mathscr{D}(p\transpose\alpha\otimes\chi^J)=
\sum_{k=1}^{r} \left\{ p\transpose(\nabla^{\Bott}_{\partial_k}\alpha) 
\otimes\chi_k\cdot\chi^J +p\transpose\alpha\otimes\chi_k\cdot
\nabla_{j(\partial_k)}(\chi^J) \right\} ,\]
for all $\alpha\in\sections{\Lambda^{\bullet}A^\vee}$ and $J\in\NO^{r}$. 

\begin{remark}
A analogue of the derivation $\mathscr{D}$ was introduced recently in~\cite[Section~2.1 and Remark~2.3]{arXiv:1512.07863}. 
It would be interesting to understand the precise relation between these two derivations. 
\end{remark}

The remainder of this section is devoted to the proof of the following theorem. 
\begin{theorem}\label{Montmartre}
If the splitting 
\[ \begin{tikzcd} 
0 \arrow{r} & A \arrow{r}{i} & L \arrow[bend left, dashed]{l}{p} \arrow{r}{q} 
& B \arrow{r} \arrow[bend left, dashed]{l}{j} & 0 
\end{tikzcd} \]
identifies $B$ with a Lie subalgebroid of $L$, 
then $\sections{\Lambda A^\vee}$ is a left $\enveloping{B}$-module and 
the chain map \[ \perturbed{\tau}:\sections{\Lambda^{\bullet}A^\vee} 
\to \sections{\Lambda^{\bullet}L^\vee\otimes\hat{S}(B^\vee)} \] defined in Proposition~\ref{Ottignies} satisfies 
\[ \perturbed{\tau}=\exp(\mathscr{D})\circ\tau \] 
and 
\begin{equation}
\label{eq:FCO}
 \perturbed{\tau}(\alpha)
= \sum_{J\in\NO^r} \frac{1}{J!} p\transpose\big( \pbw(\partial^J)\rtimes 
\alpha\big)\otimes\chi^J ,\qquad\forall \alpha\in\sections{\Lambda A^\vee} .
\end{equation}
\end{theorem}

The following lemma is an analogue of Lemma~\ref{Telegraphe}, 
which is proved \textit{mutans mutandis}.
 
\begin{lemma}\label{Telegraphe2}
For all $\alpha\in\sections{\Lambda A^\vee}$ and $J\in\NO^{r}$, 
we have $h\Xi(p\transpose\alpha\otimes\chi^J)=0$. 
\end{lemma}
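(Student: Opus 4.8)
The plan is to replay the computation in the proof of Proposition~\ref{Telegraphe}, using Lemma~\ref{Ragusa2} in place of Lemma~\ref{Ragusa} so as to carry the extra factor $p\transpose\alpha$ along inertly. First I would fix a local frame $(l_m)_{m=1}^{\rk(L)}$ of $L$ adapted to the splitting, so that $l_m=i(a_m)$ for $m\leqslant\rk(A)$ and $l_{\rk(A)+m'}=j(\partial_{m'})$ for $m'\in\{1,\dots,r\}$, with dual frame $\lambda_m=p\transpose(\alpha_m)$ for $m\leqslant\rk(A)$ and $\lambda_{\rk(A)+m'}=q\transpose(\chi_{m'})$. Since $\Xi^\nabla$ acts as a \emph{vertical} derivation of $\sections{\Lambda^\bullet L\dual\otimes\hat S B\dual}$ --- it wedges a $1$-form from $L\dual$ onto the $\Lambda L\dual$-factor and differentiates only the $\hat S(B\dual)$-factor in the fibre directions, leaving the $\Lambda L\dual$-factor otherwise alone --- one has
\[ \Xi^\nabla(p\transpose\alpha\otimes\chi^J)=\sum_{m=1}^{\rk(L)}\big(\lambda_m\wedge p\transpose\alpha\big)\otimes\big(\interior{l_m}\Xi^\nabla\big)(\chi^J), \]
where $\interior{l_m}\Xi^\nabla\in\sections{\hat S^{\geqslant 1}B\dual\otimes B}$ (Proposition~\ref{Oslo}) acts as a derivation of $\sections{\hat S B\dual}$.

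Next I would apply $h$ to each summand. For $m\leqslant\rk(A)$ the factor $\lambda_m\wedge p\transpose\alpha=p\transpose(\alpha_m\wedge\alpha)$ lies in $\sections{p\transpose(\Lambda A\dual)}$, i.e.\ has vanishing $B$-degree ($v=0$ in the notation of Lemma~\ref{Ragusa2}), so $h$ annihilates that summand. For $m=\rk(A)+m'$ the factor $q\transpose(\chi_{m'})\wedge p\transpose\alpha$ has $v=1$, and Lemma~\ref{Ragusa2} gives, for any multi-index $K$,
\[ h\big((q\transpose\chi_{m'}\wedge p\transpose\alpha)\otimes\chi^K\big)=\frac{1}{1+\abs{K}}\sum_{p=1}^{r}\interior{j(\partial_p)}\big(q\transpose\chi_{m'}\wedge p\transpose\alpha\big)\otimes\chi^{K+e_p}=\frac{1}{1+\abs{K}}\,p\transpose\alpha\otimes\chi^{K+e_{m'}}, \]
since $\interior{j(\partial_p)}q\transpose\chi_{m'}=\duality{\chi_{m'}}{q(j(\partial_p))}=\delta_{m'p}$ while $\interior{j(\partial_p)}p\transpose\alpha=0$ because $p\circ j=0$. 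Thus $p\transpose\alpha$ is merely carried along, as a constant coefficient would be.

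Finally, I would expand $(\interior{l_m}\Xi^\nabla)(\chi^J)$ by the Leibniz rule and rewrite $\interior{l_m}\Xi^\nabla(\chi_l)=\sum_{I\in\NO^r}\tfrac{1}{I!}\duality{\Theta^\nabla(l_m;\partial^I)}{\chi_l}\,\chi^I$ using Lemma~\ref{Bercy} together with the defining relation~\eqref{Rambuteau}. Collecting the contributions and reindexing by $M=I+e_{m'}$ --- which produces the multiplicities $M_{m'}$ exactly as in the proof of Proposition~\ref{Telegraphe} --- one finds that $h\Xi^\nabla(p\transpose\alpha\otimes\chi^J)$ equals $p\transpose\alpha$ tensored with a sum of monomials $\chi^{M+J-e_l}$ whose coefficients each factor through $\sum_{m'=1}^{r}M_{m'}\,\Theta^\nabla\big(j(\partial_{m'});\partial^{M-e_{m'}}\big)$. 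This combination vanishes for every $M\in\NO^r$ by Proposition~\ref{Mozambique} (specialised to the symmetric monomial $\partial^M$, grouping repeated factors), and hence $h\Xi^\nabla(p\transpose\alpha\otimes\chi^J)=0$.

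The actual content here is identical to that of Proposition~\ref{Telegraphe}; the one step that needs genuine care is the bookkeeping which guarantees that the two ``extra'' pieces of data --- the $1$-form $p\transpose\alpha$ and the leftover symmetric factor $\chi^{J-e_l}$ left behind when the vertical derivation differentiates $\chi^J$ --- pass unobstructed through every manipulation: the former because $p\circ j=0$, so the contraction $\interior{j(\partial_p)}$ never sees it, and the latter because it multiplies the whole expression uniformly and therefore does not interfere with recognising the Proposition~\ref{Mozambique} combination. Convergence of the series in $\sections{\Lambda L\dual\otimes\hat S B\dual}$ is automatic since $h$ raises symmetric degree.
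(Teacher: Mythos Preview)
Your proposal is correct and is precisely the \emph{mutatis mutandis} adaptation the paper invokes: the paper gives no explicit proof, simply pointing to Proposition~\ref{Telegraphe}, and you have faithfully carried out that adaptation by substituting Lemma~\ref{Ragusa2} for Lemma~\ref{Ragusa} and verifying that the two new ingredients --- the factor $p\transpose\alpha$ (killed by $\iota_{j(\partial_p)}$ since $p\circ j=0$) and the residual symmetric factor $\chi^{J-e_k}$ from the Leibniz expansion --- are spectators that do not interfere with the reindexing $M=I+e_{m'}$ and the subsequent appeal to Proposition~\ref{Mozambique}.
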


\begin{lemma}\label{Montrouge}
For all $\alpha\in\sections{\Lambda A^\vee}$ and $J\in\NO^r$, we have 
\begin{align*} 
h \varrho(p\transpose\alpha\otimes\chi^J) 
=& \frac{1}{1+\abs{J}}\sum_k\left\{ p\transpose(\nabla^{\Bott}_{\partial_k}\alpha) 
\otimes\chi_k\cdot\chi^J +p\transpose\alpha\otimes\chi_k\cdot \nabla_{j(\partial_k)}(\chi^J) \right\} \\ 
=& \frac{1}{1+\abs{J}} \mathscr{D}(p\transpose\alpha\otimes\chi^J) 
.\end{align*}
\end{lemma}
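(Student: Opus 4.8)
The plan is to remove the $\Xi^\nabla$ contribution, expand $d_L^\nabla(p\transpose\alpha\otimes\chi^J)$ in a frame of $L$ adapted to the splitting, and then apply $h$ term by term with the help of Lemma~\ref{Ragusa2}. First, since $\varrho=d_L^\nabla-\Xi^\nabla$ and Lemma~\ref{Telegraphe2} gives $h\,\Xi^\nabla(p\transpose\alpha\otimes\chi^J)=0$, it suffices to compute $h\big(d_L^\nabla(p\transpose\alpha\otimes\chi^J)\big)$. I would pick a local frame of $L$ of the form $i(a_1),\dots,i(a_{\rk(A)}),j(\partial_1),\dots,j(\partial_r)$, whose dual frame of $L\dual$ consists of forms in $p\transpose(A\dual)$ together with $q\transpose(\chi_1),\dots,q\transpose(\chi_r)$. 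Feeding this into the definition of $d_L^\nabla$ splits the output into three groups: the term $d_L(p\transpose\alpha)\otimes\chi^J$; the terms coming from the $i(a)$-directions, whose form parts all lie in $p\transpose(\Lambda A\dual)$; and the terms $\sum_{k=1}^{r}\big(q\transpose(\chi_k)\wedge p\transpose\alpha\big)\otimes\nabla_{j(\partial_k)}\chi^J$ coming from the $j(\partial)$-directions.

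Next I would apply $h$ to each group. The middle group dies immediately, since $h$ annihilates any $\omega\otimes\xi$ whose form part $\omega$ carries no $q\transpose(B\dual)$-factor. For the last group, $q\transpose(\chi_k)\wedge p\transpose\alpha$ has exactly one $q\transpose(B\dual)$-factor and, since $\nabla$ preserves symmetric degree on $\sections{\hat{S}(B\dual)}$, the element $\nabla_{j(\partial_k)}\chi^J$ is homogeneous of degree $\abs J$; Lemma~\ref{Ragusa2} therefore produces $\tfrac{1}{1+\abs J}\sum_{p}\interior{j(\partial_p)}\big(q\transpose(\chi_k)\wedge p\transpose\alpha\big)\otimes\chi_p\cdot\nabla_{j(\partial_k)}\chi^J$. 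Because $\interior{j(\partial_p)}q\transpose(\chi_k)=\delta_{kp}$ and $\interior{j(\partial_p)}p\transpose\alpha=0$, the contraction collapses to $\delta_{kp}\,p\transpose\alpha$, and summing over $k$ reproduces $\tfrac{1}{1+\abs J}\sum_{k}p\transpose\alpha\otimes\chi_k\cdot\nabla_{j(\partial_k)}(\chi^J)$, the second summand of $\mathscr{D}(p\transpose\alpha\otimes\chi^J)$.

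The remaining and essential step is the first group, and this is where the matched-pair hypothesis is used. By Proposition~\ref{Piacenza}(3) --- applicable exactly because $L=A\bowtie B$ is a matched pair rather than a bare Lie pair --- one may write $d_L(p\transpose\alpha)=p\transpose(d_A\alpha)+\eta$ with $\eta\in\sections{p\transpose(\Lambda A\dual)\otimes q\transpose(\Lambda^1 B\dual)}$; the absence of a $q\transpose(\Lambda^{\geqslant 2}B\dual)$-component, which Proposition~\ref{Piacenza}(2) would allow for a general Lie pair, is precisely what keeps the computation clean, since $h$ would otherwise detect that component. As $h$ kills $p\transpose(d_A\alpha)\otimes\chi^J$, Lemma~\ref{Ragusa2} gives $h\big(d_L(p\transpose\alpha)\otimes\chi^J\big)=\tfrac{1}{1+\abs J}\sum_{k}\interior{j(\partial_k)}\eta\otimes\chi_k\cdot\chi^J$. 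Then $\interior{j(\partial_k)}\eta=\interior{j(\partial_k)}\big(d_L(p\transpose\alpha)\big)$ since $\interior{j(\partial_k)}$ annihilates $p\transpose(d_A\alpha)$, and the Cartan formula $\liederivative{j(\partial_k)}=\interior{j(\partial_k)}d_L+d_L\interior{j(\partial_k)}$ on $\sections{\Lambda L\dual}$ together with $\interior{j(\partial_k)}p\transpose\alpha=0$ yields $\interior{j(\partial_k)}\big(d_L(p\transpose\alpha)\big)=\liederivative{j(\partial_k)}(p\transpose\alpha)=p\transpose(\nabla^{\Bott}_{\partial_k}\alpha)$, the last equality being the commuting square relating $\nabla^{\Bott}$ with $\liederivative{j(\argument)}$ recorded earlier. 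This contributes $\tfrac{1}{1+\abs J}\sum_{k}p\transpose(\nabla^{\Bott}_{\partial_k}\alpha)\otimes\chi_k\cdot\chi^J$, the first summand of $\mathscr{D}(p\transpose\alpha\otimes\chi^J)$; adding the three contributions gives $h\varrho(p\transpose\alpha\otimes\chi^J)=\tfrac{1}{1+\abs J}\mathscr{D}(p\transpose\alpha\otimes\chi^J)$, as desired.

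The main obstacle I anticipate is the degree bookkeeping in the first group: pinning down the two graded pieces of $d_L(p\transpose\alpha)$, using the matched-pair condition to rule out a third, and recognizing $\interior{j(\partial_k)}d_L(p\transpose\alpha)$ as the Bott $B$-connection acting on $\sections{\Lambda A\dual}$ by way of the Cartan formula. Once those identifications are in place, the rest is a routine substitution into the definitions of $d_L^\nabla$ and $h$.
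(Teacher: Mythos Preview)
Your proof is correct and follows essentially the same route as the paper: reduce to $h\,d_L^\nabla$ via Lemma~\ref{Telegraphe2}, invoke the matched-pair bigrading from Proposition~\ref{Piacenza}(3) to ensure the form part of $d_L^\nabla(p\transpose\alpha\otimes\chi^J)$ has $B\dual$-degree at most $1$, apply Lemma~\ref{Ragusa2} with the uniform prefactor $\tfrac{1}{1+\abs{J}}$, and identify $\interior{j(\partial_k)}d_L(p\transpose\alpha)$ with $p\transpose(\nabla^{\Bott}_{\partial_k}\alpha)$ via Cartan's formula. The only cosmetic difference is that the paper keeps the covariant-derivative sum $\sum_t\lambda_t\wedge p\transpose\alpha\otimes\nabla_{l_t}\chi^J$ intact and simplifies $\sum_t(\interior{j(\partial_k)}\lambda_t)\,l_t=j(\partial_k)$ after applying $h$, whereas you split it into $A$- and $B$-directions beforehand.
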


\begin{proof}
Since $L=A\bowtie B$, Proposition~\ref{Piacenza} asserts that, if $\alpha\in\sections{\Lambda^u A^\vee}$, then
\[ d_L(p\transpose\alpha)\in\Omega^{u+1,0}\oplus\Omega^{u,1} ,\] 
where $\Omega^{u,v}=\sections{p\transpose(\Lambda^{u} A^\vee)\wedge q\transpose(\Lambda^{v} B^\vee)}$. 
Therefore, if $\alpha\in\sections{\Lambda^u A^\vee}$, we have 
\[ d_L^{\nabla}(p\transpose\alpha\otimes\chi^J) = d_L(p\transpose\alpha)\otimes\chi^J 
+\sum_t\lambda_t\wedge(p\transpose\alpha)\otimes\nabla_{l_t}(\chi^J) 
\in (\Omega^{u+1,0}\oplus\Omega^{u,1})\otimes_R\sections{S^{\abs{J}}(B^\vee)} \]
and it follows from Equation~\eqref{Skopje} that 
\begin{align*} hd_L^{\nabla}(p\transpose\alpha\otimes\chi^J) 
=& \frac{1}{1+\abs{J}} \sum_k \bigg\{ i_{j(\partial_k)}d_L(p\transpose\alpha)\otimes\chi^{J+e_k} 
+\sum_t(i_{j(\partial_k)}\lambda_t)\cdot p\transpose\alpha\otimes\chi_k\cdot\nabla_{l_t}(\chi^J) \bigg\} \\
=& \frac{1}{1+\abs{J}} \sum_k \bigg\{ \liederivative{j(\partial_k)}(p\transpose\alpha)\otimes\chi^{J+e_k} 
+p\transpose\alpha\otimes\chi_k\cdot\nabla_{\sum_t(i_{j(\partial_k)}\lambda_t) l_t}(\chi^J) \bigg\} \\
=& \frac{1}{1+\abs{J}} \sum_k \bigg\{ p\transpose(\nabla^{\Bott}_{\partial_k}\alpha)\otimes\chi^{J+e_k} 
+p\transpose\alpha\otimes\chi_k\cdot\nabla_{j(\partial_k)}(\chi^J) \bigg\} 
.\end{align*}
The desired result follows from Lemma~\ref{Telegraphe2}. 
\end{proof}

\begin{proof}[Proof of Theorem~\ref{Montmartre}]
Reasoning by induction on $k$, one proves that 
\[ \mathscr{D}^k\circ\tau(\alpha)\in\sections{p\transpose(\Lambda A^\vee)\otimes 
S^k(B^\vee)} \] for all $\alpha\in\sections{\Lambda A^\vee}$ and $k\in\NN$. 
Using Lemma~\ref{Montrouge} and reasoning by induction on $k$ once again, 
one proves that \[ (h\varrho)^{k}\circ\tau=\frac{1}{k!}\mathscr{D}^k\circ\tau \] 
for all $k\in\NN$. It follows that 
\[ \breve{\tau} = \bigg(\sum_{k=0}^{\infty}(h\varrho)^k\bigg)\circ\tau 
= \bigg(\sum_{k=0}^{\infty}\frac{1}{k!}\mathscr{D}^k\bigg)\circ\tau 
= \exp(\mathscr{D})\circ\tau .\] 

Set $\machin{\alpha}=\sum_{J\in\NO^r} \frac{1}{J!}p\transpose\big( \pbw(\partial^J)\rtimes \alpha\big)\otimes\chi^J$ 
for all $\alpha\in\sections{\Lambda A^\vee}$. 
We claim that $(\id-h\varrho)\machin{\alpha}=\tau(\alpha)$. 
It then follows from Proposition~\ref{Fort-de-France} that 
$\breve{\tau}(\alpha)=\machin{\alpha}$ --- the desired result. 

It remains to establish our claim. 
From 
\[ 0=\rho(j(\partial_k))\duality{\partial^K}{\chi^J}
=\duality{\nabla_{j(\partial_k)}(\partial^K)}{\chi^J}
+\duality{\partial^K}{\nabla_{j(\partial_k)}(\chi^J)} ,\] 
we obtain 
\begin{equation}\label{Caracas} \nabla_{j(\partial_k)}(\chi^J) 
= \sum_K\frac{1}{K!}\duality{\partial^K}{\nabla_{j(\partial_k)}(\chi^J)}\chi^K 
= -\sum_K\frac{1}{K!}\duality{\nabla_{j(\partial_k)}(\partial^K)}{\chi^J}\chi^K 
.\end{equation}

From Lemma~\ref{Telegraphe2}, Lemma~\ref{Montrouge}, and Equation~\eqref{Caracas}, we obtain 
\begin{multline*} 
h\varrho\machin{\alpha} = 
\sum_{J}\tfrac{1}{J!}\tfrac{1}{1+\abs{J}}\sum_k\bigg\{ 
p\transpose\Big(\nabla^{\Bott}_{\partial_k} \big(\pbw(\partial^J)\rtimes\alpha\big)\Big)\otimes 
\chi_k\cdot\chi^J \\ 
-p\transpose\big(\pbw(\partial^J)\rtimes\alpha\big)\otimes\chi_k\cdot 
\sum_K\tfrac{1}{K!}\duality{\nabla_{j(\partial_k)}(\partial^K)}{\chi^J}\chi^K\bigg\} 
\end{multline*} 
This can be rewritten as 
\begin{multline*} 
h\varrho\machin{\alpha} = 
\sum_J\tfrac{1}{J!}\tfrac{1}{1+\abs{J}}\sum_k 
p\transpose\big(j(\partial_k)\cdot\pbw(\partial^J)\rtimes\alpha\big) 
\otimes\chi^{J+e_k} \\ 
-\sum_K\tfrac{1}{K!}\tfrac{1}{1+\abs{K}}\sum_k 
p\transpose\Big(\pbw\big(\nabla_{j(\partial_k)}(\partial^K)\big)\rtimes\alpha\Big) 
\otimes\chi^{K+e_k} 
\end{multline*}
and then 
\begin{multline*} 
h\varrho\machin{\alpha} = \sum_{\substack{M\in\NO^r \\ \abs{M}\geqslant 1}} 
\frac{1}{M!} p\transpose \bigg( \frac{1}{\abs{M}}\sum_k M_k 
\Big( j(\partial_k)\cdot\pbw\big(\partial^{M-e_k}\big) 
-\pbw\big(\nabla_{j(\partial_k)}\partial^{M-e_k}\big) \Big) 
\rtimes\alpha \bigg)\otimes\chi^M 
.\end{multline*}
Finally, it follows from Equation~\eqref{Rome} that 
\[ h\varrho\machin{\alpha} 
= \sum_{\substack{M\in\NO^r \\ \abs{M}\geqslant 1}} 
\frac{1}{M!} p\transpose \big(\pbw(\partial^M)\rtimes\alpha\big)\otimes\chi^M 
= \machin{\alpha}- p\transpose(\alpha)\otimes 1 
= \machin{\alpha}-\tau(\alpha) .\qedhere\] 
\end{proof}

\section{Application: the `formal exponential map' of Emmrich--Weinstein}
\label{EW}

In this section, we give a simple and direct proof of a result 
--- see \cite[Theorem~1.6 and Section~7]{MR1327535} and Theorem~\ref{Nassau} below --- 
which Emmrich--Weinstein proved by way of an indirect argument 
involving real analytic manifolds.

According to Proposition~\ref{strawberry}, given a torsion-free affine connection $\nabla$ on $M$, 
one can construct a homological vector field $Q$ on the graded manifold $T_M[1]\oplus T_M$ by Fedosov's iteration method. 
This is essentially what \cite[Theorem~1.1]{MR1327535} --- more precisely the special case when $a=0$ --- 
and \cite[Theorem~2]{MR2102846} assert.
One obtains a derivation $Q$ of $\Omega^\bullet\big(M;\hat{S}(T^\vee_M)\big)$ such that $Q^2=0$.
Identifying $\sections{\hat{S}(T^\vee_M)}$ to the algebra of functions on the formal neighborhood 
$(T_M)_\infty$ of the zero section of the tangent bundle to $M$, 
Emmrich--Weinstein regard the derivation $Q$ as vector fields determining a distribution on 
$(T_M)_\infty$ transverse to the fibers of $(T_M)_\infty\to M$, 
i.e.\ a (nonlinear) formal Ehresmann connection on $(T_M)_\infty$.
Since $Q^2=0$, this distribution is involutive and the Ehresmann connection is flat.
Those sections $\varsigma\in\sections{\hat{S}(T^\vee_M)}$ such that $Q(\varsigma)=0$ 
are interpreted as functions on $(T_M)_\infty$ which are constant along the leaves 
of the foliation tangent to the flat Ehresmann connection.
As explained by Ehresmann--Weinstein, the Ehresmann connection is transverse to the zero section 
and each one of its leaves intersects the zero section in a unique point.
The leaves of the foliation given by the flat Ehresmann connection are the fibers 
of a `mapping' $\EXP:(T_M)_\infty\to M$, which Ehresmann--Weinstein call 
`formal exponential map.'
Identifying $M$ with the zero section of $(T_M)_\infty$, 
functions defined on $M$ can be extended to functions on $(T_M)_\infty$ constant along the leaves.
The resulting map $C^\infty(M)\to\sections{\hat{S}(T^\vee_M)}$ 
is the pull-back of functions through $\EXP$.

\begin{theorem}[{\cite[Theorem~1.6]{MR1327535}}]
\label{Nassau}
Given a torsion-free affine connection $\nabla$ on $M$, 
the `formal exponential map' $\EXP$ described above coincides 
with the infinite-order jet of the geodesic exponential map $\exp$
determined by the connection $\nabla$.
\end{theorem}

\begin{proof}
By definition, the `formal exponential map' $\EXP$ is completely determined by $Q$ in the following sense:
the pull-back $\varsigma=\EXP^*(f)$ of a function $f\in C^\infty(M)$ by $\EXP$ 
is the unique solution $\varsigma\in\sections{\hat{S}(T^\vee_M)}$ 
of the initial value problem \[ Q(\varsigma)=0 \qquad \sigma(\varsigma)=f .\]
We think of the map $\sigma:\sections{\hat{S}(T^\vee_M)}\to C^\infty(M)$ 
as the pull-back of functions through the zero section of $(T_M)_\infty\to M$.
On the other hand, Proposition~\ref{Ottignies} applied to the Lie pair $(L,A)$ 
where $L$ is the tangent bundle to $M$ and $A$ is its rank-zero subbundle yields the contraction 
\[ \begin{tikzcd}
C^\infty(M) \arrow[r, "\perturbed{\tau}", shift left] &
\big(\Omega^\bullet(M;\hat{S}(T^\vee_M)),d^{\nabla^\lightning}\big) 
\arrow[l, "\perturbed{\sigma}", shift left]
\arrow[loop, "\perturbed{h}", out=5, in=-5, looseness = 3]
\end{tikzcd} \]
where $C^\infty(M)$ is seen as a cochain complex concentrated in degree $0$.
In particular, for all $f\in C^\infty(M)$, we have 
\[ \perturbed{\tau}(f)\in\sections{\hat{S}(T^\vee_M)}, 
\qquad d^{\nabla^\lightning}\big(\perturbed{\tau}(f)\big)=0, 
\qquad\text{and}\qquad \sigma\big(\perturbed{\tau}(f)\big)=f .\]
According to Theorem~\ref{thm:Lodz}, we have $Q=d^{\nabla^\lightning}$. 
Therefore, $\EXP^*=\perturbed{\tau}$.
Let $n$ denote the dimension of the manifold $M$.
It follows from Equation~\eqref{eq:FCO} in Theorem~\ref{Montmartre} that
\[ \perturbed{\tau}(f) = \sum_{J\in\NO^n} \frac{1}{J!} \pbw(\partial^J)f \otimes \chi^J .\] 
According to~\cite[Theorem~3.11]{arXiv:1408.2903}, the Poincar\'e--Birkhoff--Witt isomorphism $\pbw$ 
(described in Theorem~\ref{Nairobi}) is the infinite-order jet of the geodesic exponential map $\exp:T_M\to M$ 
arising from the connection $\nabla$.
Hence, we obtain 
\[ \EXP^*(f)= \perturbed{\tau}(f) = \sum_{J\in\NO^n} \frac{1}{J!} \partial^J\big(\exp^*(f)\big) \otimes \chi^J .\]
This concludes the proof that $\EXP$ is the infinite-order jet of $\exp$.
\end{proof}

In fact, Theorem~\ref{Montmartre} above can be seen as an extension 
of Emmrich--Weinstein's result \cite[Theorem~1.6]{MR1327535}
to the case of matched pairs.

\appendix

\section{Homological perturbation}\label{Vilnius} 

A contraction of a cochain complex $(N,\delta)$ onto a cochain complex $(M,d)$
consists of a pair of chain maps $\sigma:N\to M$ and $\tau:M\to N$ 
and an endomorphism $h:N\to N[-1]$ of the graded module $N$ satisfying 
the following five relations: 
\begin{gather*} \sigma\tau=\id_M, \qquad \tau\sigma-\id_N=h\delta+\delta h, \\
\sigma h=0, \qquad h\tau=0, \qquad h^2=0 .\end{gather*}
We symbolize such a contraction by a diagram 
\[ \begin{tikzcd} 
(M,d) \arrow[r, "\tau", shift left] & 
(N,\delta) \arrow[l, "\sigma", shift left] 
\arrow["h", loop right]
\end{tikzcd} .\]
If, furthermore, the cochain complexes $N$ and $M$ are filtered and the maps $\sigma$, $\tau$, 
and $h$ preserve the filtration, the contraction is said to be filtered \cite[\S12]{MR0056295}. 

A descending filtration
\[ \cdots \subseteq F_{p+1}N \subseteq F_{p}N \subseteq F_{p-1}N \subseteq \cdots \]
on a cochain complex $N$ is said to be exhaustive if $N=\bigcup_p F_p N$
and complete if $N=\varprojlim_p \frac{N}{F_p N}$.

A \emph{perturbation} of the filtered cochain complex 
\[ \begin{tikzcd} \cdots \arrow{r} & N^{n-1} \arrow{r}{\delta} 
& N^{n} \arrow{r}{\delta} & N^{n+1} \arrow{r} & \cdots \end{tikzcd} \]
is an operator $\varrho$ of degree $+1$ on $N$, 
which raises the filtration (i.e.\ $\varrho(F_{p} N)\subseteq F_{p+1} N$)
and satisfies $(\delta+\varrho)^2=0$
so that $\delta+\varrho$ is a new coboundary operator on $N$.

We refer the reader to~\cite[\S1]{MR1109665} for a brief history of the following lemma. 

\begin{lemma}[Homological Perturbation \cite{MR0220273}]
\label{Riga}
Let 
\[ \begin{tikzcd} 
(M,d) \arrow[r, "\tau", shift left] & 
(N,\delta) \arrow[l, "\sigma", shift left] 
\arrow["h", loop right]
\end{tikzcd} \]
be a filtered contraction. 
Given a perturbation $\varrho$ of the cochain complex $(N,\delta)$, 
if the filtrations on $M$ and $N$ are exhaustive and complete, then the series 
\begin{align*}
\perturbed{\tau}&:=\sum_{k=0}^{\infty}(h\varrho)^k\tau &
\perturbed{h}&:=\sum_{k=0}^{\infty}(h\varrho)^k h=\sum_{k=0}^{\infty}h(\varrho h)^k \\
\perturbed{\sigma}&:=\sum_{k=0}^{\infty}\sigma(\varrho h)^k &
\vartheta&:=\sum_{k=0}^{\infty}\sigma(\varrho h)^k\varrho\tau=\sum_{k=0}^{\infty}\sigma\varrho(h\varrho)^k\tau
\end{align*}
converge, $\vartheta$ is a perturbation of the cochain complex $(M,d)$, and 
\[ \begin{tikzcd} 
(M,d+\vartheta) \arrow[r, "\perturbed{\tau}", shift left] & 
(N,\delta+\varrho) \arrow[l, "\perturbed{\sigma}", shift left] 
\arrow["\perturbed{h}", loop right]
\end{tikzcd} \]
constitutes a new filtered contraction. 
\end{lemma}

\begin{proposition}\label{Fort-de-France}
Under the same hypothesis as in Lemma~\ref{Riga}, 
the chain map $\perturbed{\tau}$ is entirely determined by $\tau$, $h\varrho$ and the relation $(\id-h\varrho)\perturbed{\tau}=\tau$. 
Likewise, the homotopy operator $\perturbed{h}$ is entirely determined by $h$, $h\varrho$ and the relation $(\id-h\varrho)\perturbed{h}=h$. 
\end{proposition}

\begin{proof}
Since the filtration on $N$ is complete and
$\varrho$ raises the filtration while $h$ preserves it, 
the geometric series $\sum_{k=0}^\infty (h\varrho)^k$ converges 
and its sum is the inverse of the operator $\id-h\varrho$. 
The result follows immediately.
\end{proof}

\section*{Acknowledgements} 

We would like to thank 
Ruggero Bandiera, 
Panagiotis Batakidis, 
Martin Bordemann, 
Damien Broka,
Vasily Dolgushev,
Olivier Elchinger,
Camille Laurent-Gengoux, 
Hsuan-Yi Liao, 
Kirill Mackenzie,
Rajan Mehta,
and Yannick Voglaire 
for fruitful discussions and useful comments. 
Stiénon is grateful to Université Paris~7 for its hospitality during his sabbatical leave in 2015--2016. 

\bibliographystyle{amsplain}
\bibliography{references}

\def\cprime{$'$}
\providecommand{\bysame}{\leavevmode\hbox to3em{\hrulefill}\thinspace}
\providecommand{\MR}{\relax\ifhmode\unskip\space\fi MR }
% \MRhref is called by the amsart/book/proc definition of \MR.
\providecommand{\MRhref}[2]{%
  \href{http://www.ams.org/mathscinet-getitem?mr=#1}{#2}
}
\providecommand{\href}[2]{#2}
\begin{thebibliography}{10}

\bibitem{MR1432574}
M.~Alexandrov, A.~Schwarz, O.~Zaboronsky, and M.~Kontsevich, \emph{The geometry
  of the master equation and topological quantum field theory}, Internat. J.
  Modern Phys. A \textbf{12} (1997), no.~7, 1405--1429. \MR{1432574}

\bibitem{arXiv:1901.04602}
Ruggero {Bandiera}, Mathieu {Stiénon}, and Ping {Xu}, \emph{{Polyvector fields
  and polydifferential operators associated with Lie pairs}}, arXiv e-prints
  (2019), arXiv:1901.04602.

\bibitem{MR3724780}
Panagiotis Batakidis and Yannick Voglaire, \emph{Atiyah classes and dg-{L}ie
  algebroids for matched pairs}, J. Geom. Phys. \textbf{123} (2018), 156--172.
  \MR{3724780}

\bibitem{arXiv:1512.07863}
Arie {Blom} and Hessel {Posthuma}, \emph{{An index theorem for Lie
  algebroids}}, arXiv e-prints (2015), arXiv:1512.07863.

\bibitem{MR0362335}
Raoul Bott, \emph{Lectures on characteristic classes and foliations}, Lectures
  on algebraic and differential topology ({S}econd {L}atin {A}merican {S}chool
  in {M}ath., {M}exico {C}ity, 1971), Springer, Berlin, 1972, Notes by Lawrence
  Conlon, with two appendices by J. Stasheff, pp.~1--94. Lecture Notes in
  Math., Vol. 279. \MR{0362335}

\bibitem{MR0220273}
Ronald Brown, \emph{The twisted {E}ilenberg-{Z}ilber theorem}, Simposio di
  {T}opologia ({M}essina, 1964), Edizioni Oderisi, Gubbio, 1965, pp.~33--37.
  \MR{0220273 (36 \#3339)}

\bibitem{MR1747916}
Ana Cannas~da Silva and Alan Weinstein, \emph{Geometric models for
  noncommutative algebras}, Berkeley Mathematics Lecture Notes, vol.~10,
  American Mathematical Society, Providence, RI; Berkeley Center for Pure and
  Applied Mathematics, Berkeley, CA, 1999. \MR{1747916}

\bibitem{MR3439229}
Zhuo Chen, Mathieu Stiénon, and Ping Xu, \emph{From {A}tiyah classes to
  homotopy {L}eibniz algebras}, Comm. Math. Phys. \textbf{341} (2016), no.~1,
  309--349. \MR{3439229}

\bibitem{MR2102846}
Vasiliy Dolgushev, \emph{Covariant and equivariant formality theorems}, Adv.
  Math. \textbf{191} (2005), no.~1, 147--177. \MR{2102846 (2006c:53101)}

\bibitem{MR0056295}
Samuel Eilenberg and Saunders Mac~Lane, \emph{On the groups of {$H(\Pi,n)$}.
  {I}}, Ann. of Math. (2) \textbf{58} (1953), 55--106. \MR{0056295 (15,54b)}

\bibitem{MR1327535}
Claudio Emmrich and Alan Weinstein, \emph{The differential geometry of
  {F}edosov's quantization}, Lie theory and geometry, Progr. Math., vol. 123,
  Birkh\"auser Boston, Boston, MA, 1994, pp.~217--239. \MR{1327535}

\bibitem{MR1293654}
Boris~V. Fedosov, \emph{A simple geometrical construction of deformation
  quantization}, J. Differential Geom. \textbf{40} (1994), no.~2, 213--238.
  \MR{1293654 (95h:58062)}

\bibitem{MR706215}
Simone Gutt, \emph{An explicit {$\star$}-product on the cotangent bundle of a
  {L}ie group}, Lett. Math. Phys. \textbf{7} (1983), no.~3, 249--258.
  \MR{706215}

\bibitem{arXiv:0708.3598}
Hans-Christian {Herbig}, \emph{{Variations on Homological Reduction}}, arXiv
  e-prints (2007), arXiv:0708.3598.

\bibitem{MR1109665}
Johannes Huebschmann and Tornike Kadeishvili, \emph{Small models for chain
  algebras}, Math. Z. \textbf{207} (1991), no.~2, 245--280. \MR{1109665}

\bibitem{MR1231231}
N.~P. Landsman, \emph{Strict deformation quantization of a particle in external
  gravitational and {Y}ang-{M}ills fields}, J. Geom. Phys. \textbf{12} (1993),
  no.~2, 93--132. \MR{1231231}

\bibitem{MR1427063}
\bysame, \emph{Classical and quantum representation theory}, Proceedings
  {S}eminar 1989--1990 {M}athematical {S}tructures in {F}ield {T}heory
  ({A}msterdam), CWI Syllabi, vol.~39, Math. Centrum, Centrum Wisk. Inform.,
  Amsterdam, 1996, pp.~135--163. \MR{1427063}

\bibitem{arXiv:1408.2903}
Camille {Laurent-Gengoux}, Mathieu {Stiénon}, and Ping {Xu},
  \emph{{Poincaré--Birkhoff--Witt isomorphisms and Kapranov dg-manifolds}},
  arXiv e-prints (2014), arXiv:1408.2903.

\bibitem{MR3910470}
Hsuan-Yi Liao and Mathieu Sti\'{e}non, \emph{Formal exponential map for graded
  manifolds}, Int. Math. Res. Not. IMRN (2019), no.~3, 700--730. \MR{3910470}

\bibitem{MR3650387}
Hsuan-Yi Liao, Mathieu Stiénon, and Ping Xu, \emph{Formality theorem for
  {$\mathfrak{g}$}-manifolds}, C. R. Math. Acad. Sci. Paris \textbf{355}
  (2017), no.~5, 582--589. \MR{3650387}

\bibitem{MR3754617}
\bysame, \emph{Formality theorem for differential graded manifolds}, C. R.
  Math. Acad. Sci. Paris \textbf{356} (2018), no.~1, 27--43. \MR{3754617}

\bibitem{MR3964152}
\bysame, \emph{Formality and {K}ontsevich-{D}uflo type theorems for {L}ie
  pairs}, Adv. Math. \textbf{352} (2019), 406--482. \MR{3964152}

\bibitem{MR1430434}
Jiang-Hua Lu, \emph{Poisson homogeneous spaces and {L}ie algebroids associated
  to {P}oisson actions}, Duke Math. J. \textbf{86} (1997), no.~2, 261--304.
  \MR{1430434}

\bibitem{MR1716681}
Kirill C.~H. Mackenzie and Tahar Mokri, \emph{Locally vacant double {L}ie
  groupoids and the integration of matched pairs of {L}ie algebroids}, Geom.
  Dedicata \textbf{77} (1999), no.~3, 317--330. \MR{1716681}

\bibitem{MR2709144}
Rajan~A. Mehta, \emph{Supergroupoids, double structures, and equivariant
  cohomology}, ProQuest LLC, Ann Arbor, MI, 2006, Thesis (Ph.D.)--University of
  California, Berkeley. \MR{2709144}

\bibitem{MR2534186}
\bysame, \emph{{$Q$}-algebroids and their cohomology}, J. Symplectic Geom.
  \textbf{7} (2009), no.~3, 263--293. \MR{2534186}

\bibitem{MR1460632}
Tahar Mokri, \emph{Matched pairs of {L}ie algebroids}, Glasgow Math. J.
  \textbf{39} (1997), no.~2, 167--181. \MR{1460632}

\bibitem{MR1687747}
Victor Nistor, Alan Weinstein, and Ping Xu, \emph{Pseudodifferential operators
  on differential groupoids}, Pacific J. Math. \textbf{189} (1999), no.~1,
  117--152. \MR{1687747 (2000c:58036)}

\bibitem{MR0154906}
George~S. Rinehart, \emph{Differential forms on general commutative algebras},
  Trans. Amer. Math. Soc. \textbf{108} (1963), 195--222. \MR{0154906 (27
  \#4850)}

\bibitem{MR1815717}
Ping Xu, \emph{Quantum groupoids}, Comm. Math. Phys. \textbf{216} (2001),
  no.~3, 539--581. \MR{1815717 (2002f:17033)}

\end{thebibliography}
\end{document}